\DeclareMathOperator{\var}{var}
\theoremstyle{plain}
\newtheorem{theorem}{Theorem}
\newtheorem{lemma}{Lemma}
\newtheorem{prop}{Proposition}
\newtheorem{cor}{Corollary}
\theoremstyle{definition}
\newtheorem{defn}{Definition}
\newcommand{\N}{\mathbbm{N}}
\newcommand{\R}{\mathbbm{R}}
\DeclareMathOperator{\dimh}{dim_H}
\DeclareMathOperator{\dimp}{dim_P}
\DeclareMathOperator{\dimb}{dim_B}
\newcommand{\tyk}{\mathcal}
\def\B{\mathcal B}
\def\E{\tyk{E}_{\textup{ah}}}
\def\Aio{\tyk{A}_{\textup{i.o.}}}
\newcommand{\para}[1]{\left( #1 \right )}
\newcommand{\tub}[1]{\left\{#1\right\}}
\newcommand{\num}[1]{\left | #1\right |}
\newcommand{\Meng}[2]{\left\{\,#1\mathrel{}:\mathrel{}#2\,\right\}}
\begin{document}
\title[Shrinking targets \& eventually always hitting points]{Shrinking targets and eventually always hitting points for interval maps}

\author{Maxim Kirsebom}
\address{Maxim Kirsebom, University of Hamburg, Department of Mathematics, Bundesstrasse 55, 20146 Hamburg, Germany}
\email{maxim.kirsebom@uni-hamburg.de}

\author{Philipp Kunde}
\address{Philipp Kunde, Indiana University, Department of Mathematics, 831 East 3rd St., Bloomington, IN 47405, USA}
\email{pkunde@iu.edu}

\author{Tomas Persson}
\address{Tomas Persson, Centre for Mathematical Sciences, Lund University, Box 118, 221 00 Lund, Sweden}
\email{tomasp@maths.lth.se}

\thanks{We thank both J\"{o}rg Schmeling and Dmitry Kleinbock for
  interesting discussions and helpful comments and suggestions which
  improved the paper. We thank Jorge Freitas for answering our
  questions about Hitting Time Statistics for the Gau\ss{} map. We
  also acknowledge financial support by the Hamburg--Lund Funding
  Program 2018 which made several mutual research visits possible.}

\subjclass[2010]{37A05, 37E05, 11J70}

\begin{abstract}
	We study shrinking target problems and the set $\E$ of eventually always hitting points. These are the points whose first $n$ iterates will never have empty intersection with the $n$-th target for sufficiently large $n$. We derive necessary and sufficient conditions on the shrinking rate of the targets for $\E$ to be of full or zero measure especially for some interval maps including the doubling map, some quadratic maps and the Manneville--Pomeau map. We also obtain results for the Gau\ss{} map and correspondingly for the maximal digits in continued fractions expansions. In the case of the doubling map we also compute the packing dimension of $\E$ complementing already known results on the Hausdorff dimension of $\E$.
	\end{abstract}

\maketitle

\section{Introduction and setup}
%\begin{itemize}
%	\item General background on Shrinking Target problems
%	\item General setup
%	\item ???
%\end{itemize}

The term \emph{shrinking target problems} in dynamical systems describes a class of questions which seek to understand the recurrence behaviour of typical orbits of a dynamical system. The standard ingredients of such questions are a measure-preserving dynamical system $(X,\mu, T)$, $T \colon X\to X$ and $\mu$ being a finite measure. Also, we have a sequence of subsets $\tub{B_m}_{m=1}^{\infty}$ with $B_m\subset X$ and $\mu(B_m)\to 0$. Recently shrinking target problems have also been investigated in the case when the measure is infinite, see \cite{GalatoloHollandPerssonZhang}.

In this paper we focus mostly on the case of finite measure and whenever this is the case we assume the measure to be normalized to a probability measure. If nothing else is stated this may be assumed to be the setting. A few of our results concern infinite measures and it will be stated explicitly whenever this is the case. 

Throughout this paper $(X,T,\mu)$ will
always denote a measure preserving system and
$\mathbf{B}:=\tub{B_m}_{m=1}^{\infty}$ will always denote a sequence
of subsets of $X$ for which $\mu(B_m)\to 0$. We refer to this as a
\emph{sequence of shrinking targets}. We call the sequence
\emph{nested} if $B_m\supset B_{m+1}$ for all $m$.

Classical questions in this area focus on the set of points in $X$, whose $n$'th iterate under $T$, lies in the set $B_n$ for infinitely many $n$. That is, given a sequence $\mathbf{B}=\tub{B_m}_{m=1}^{\infty}$
\begin{equation*}
	\Aio=\Aio(\mathbf{B}):=\tub{x\in X:T^{n}x\in B_n \text{ for infinitely many } n\in\N}.
\end{equation*}
If $\sum \mu(B_m)<\infty$ the Borel--Cantelli Lemma tells us that
$\mu(\Aio)=0$. If $\sum\mu(B_m)=\infty$ the situation is more
complicated since the Borel--Cantelli Lemma only guarantees
$\mu(\Aio)=1$ for independent events and this is usually not
satisfied for dynamical systems. If we do have a sequence $\mathbf{B}$ for
which $\mu(\Aio(\mathbf{B}))=1$ then we call $\mathbf{B}$ a \emph{Borel--Cantelli
  \emph{(BC)} sequence}. If we can prove that a large family $\B$ of
sequences are all BC-sequences then we say that we have a
\emph{dynamical Borel--Cantelli Lemma}. In many cases such lemmas hold
if the system satisfies some version of mixing which essentially acts
as a replacement for independence. However, it is known that for any
measure-preserving system we can find a sequence $\mathbf{B}$ satisfying
$\sum\mu(B_m)=\infty$ which is not BC for the system. We may even find
such a sequence $\mathbf{B}$ which is nested \cite[Proposition~1.6]{ChernovKleinbock}. Hence there is no hope for $\B$ to be all
sequences satisfying $\sum\mu(B_m)=\infty$. It is therefore natural to
look for the largest possible sub-families of $\B$ which consist only
of BC-sequences. It turns out that sequences of balls with fixed
center, and nested sequences of balls with fixed center are good and
natural candidates. We say that $(X,\mu,T)$ has the \emph{shrinking target
  property} (STP) if for any $x_0\in X$, every sequence of balls $B_m$
centered at $x_0$ satisfying $\sum\mu(B_m)=\infty$ is a
BC-sequence. We say that $(X,\mu,T)$ has the \emph{monotone shrinking
  target property} (MSTP) if for any $x_0\in X$, every nested sequence
of balls $B_m$ centered at $x_0$ satisfying $\sum\mu(B_m)=\infty$ is a
BC-sequence. Many interesting systems are known to have either the STP
or MSTP property, see \cite{Athreya}, \cite{Tseng}, and references
therein for examples. A more comprehensive introduction
to dynamical Borel--Cantelli lemmas, including examples, can be found
in \cite{ChernovKleinbock}.

In this paper we are interested in similar properties for a certain subset (aside for a set of measure 0) of $\Aio$ known as the set of points which are \emph{eventually always hitting}. Due to the central importance of this concept we introduce it through a separate definition.
\begin{defn}[Eventually always hitting]
	A point $x\in X$ is said to be \emph{eventually always hitting} (EAH) for $\mathbf{B}=\tub{B_m}_{m=1}^{\infty}$ under $T$ if there exists some $m_0(x)\in\N$ such that for all $m\geq m_0(x)$ we have
	\begin{equation*}
		\tub{x, T(x), T^2 (x),\dots, T^{m-1}(x)}\cap B_m \neq \emptyset.
	\end{equation*}
	The set of all points in $X$ which are eventually always hitting for $\mathbf{B}$ under $T$ will be denoted $\E:=\E(\mathbf{B})$.
\end{defn}

We remark that some authors study a slightly different version of
eventually almost hitting, and require that for all $m \geq m_0 (x)$
we have $T^k (x) \in B_m$ for some $k \leq m$, whereas we require $0\leq k <
m$. For the results that we are discussing in this paper, it is
unimportant which definition we use. The results are the same, with
the same proofs, if we use the other definition instead.

Note that $\E$ may also be written as
\begin{align*}
\E 	&= \Bigg\{x \in X : \begin{array}{l}\exists\, m_0(x)\in\N\; \forall\, m > m_0 (x) \;\\ \exists\, k\in\tub{0,\dots,m-1}\text{ s.t. } T^k (x) \in B_m\end{array}\Bigg\} \\
	&= \bigcup_{n=1}^{\infty} \bigcap_{m =n}^\infty \bigcup_{k=0}^{m-1} T^{-k} (B_m).
\end{align*}
Set
\begin{equation*}
	\Lambda:=\bigcup_{k=0}^{\infty} T^{-k}\para{\bigcap_{i=1}^{\infty} B_i}.
\end{equation*}
Then $\mu(\Lambda) = 0$ since $\mu (B_i) \to 0$, and we have that
$\E\backslash\Lambda\subset \Aio$. In this sense, being eventually
always hitting for $B_m$ is a stronger property than hitting $B_m$
infinitely often.

The term eventually always hitting was coined by Kelmer in \cite{Kelm}
where this set was studied in the context of flows on hyperbolic
manifolds. Kelmer proved necessary and sufficient conditions for the set of eventually always hitting points to be of full measure. Shortly afterwards Kelmer and Yu \cite{KeYu} extended the
investigation to flows on homogeneous spaces. Also, Kleinbock and
Wadleigh \cite{KleinWad} studied the concept in the context of higher
dimensional Diophantine approximations. Imposing a long-term independence property on the shrinking target system Kleinbock, Konstantoulas and Richter \cite{KKR} recently obtained tight conditions on the shrinking rate of the targets so that $\E$ has measure zero or full measure. In particular, their assumptions are satisfied for specific choices of targets in product systems and Bernoulli shifts.

However, the concept had
already been considered a few years earlier by Bugeaud and Liao
\cite{BugeaudLiao} for a particular sequence of targets with
exponential rate of shrinking in the setting of
$\beta$-transformations $T_{\beta}(x) = \beta x - \lfloor \beta
x\rfloor$ on $[0,1]$ for every $\beta >1$. For $x \in [0,1]$ they
introduce the exponent $\hat{\nu}_{\beta}(x)$ as the supremum of real
numbers $\hat{\nu}$ for which for every sufficiently large $N \in \N$
the inequality $T^n_{\beta}(x) < \left(\beta^N\right)^{- \hat{\nu}}$
has a solution $1 \leq n \leq N$. Note that this corresponds to $x$
satisfying $\{T_{\beta}^n(x)\}_{n=1}^N\cap
B(0,\beta^{-\hat{\nu}N})\neq \emptyset$ for every sufficiently large
$N$. Hence the set $\{x\in [0,1]:\hat{\nu}_{\beta}(x)\geq \hat{\nu}\}$
corresponds to $\E(B(0,\beta^{-\hat{\nu}N}))$ in our notation (aside
from the discrepancy in definition mentioned above). They show that
\[
\dimh \left( \Meng{x}{ \hat{\nu}_{\beta}(x) \geq \hat{\nu} }\right) = \Bigl( \frac{1-\hat{\nu}}{1+\hat{\nu}}\Bigr)^2
\]
%\dimh \left( \Meng{x}{\hat{\nu}_{\beta}(x) = \hat{\nu}} \right) =
for every $\hat{\nu} \in [0,1]$, where $\dimh$ is the Hausdorff dimension. Bugeaud and Liao also obtain analogous results in the setting of $b$-ary expansions.

In this paper we prove various results concerning the measure and also dimension of $\E$ in different settings. Our different results apply in various levels of generality, hence it would be complicated to state them all accurately in this introduction. Instead we illustrate our results through application to specific systems with simpler assumptions.

Note that throughout the paper $\log$ will denote the natural logarithm. 

\subsection{Main results for specific systems}

\begin{theorem}[The doubling map]
	\label{thm1}
	Let $X=[0,1]$, let $T \colon X\to X$ be the doubling map $Tx=2x\mod 1$ and let $\mu$ denote the Lebesgue measure on $X$. Let $\tub{B_m}_{m=1}^{\infty}$ denote a nested sequence of shrinking intervals with fixed center.
	\begin{enumerate}[label=(\alph*)]
		\setlength\itemsep{0.8em}
		\item\label{thm1a} If $\mu(B_m)\leq\frac{c}{m}$ for some $c\in\R$, then $\mu(\E)=0$.
		\item\label{thm1b} If $\mu(B_m)\geq \frac{c (\log m)^2}{m}$ for some $c>0$ sufficiently large, then $\mu(\E)=1$.
	\end{enumerate}
Let $B_m=B(0,2^{-sm})$, $s\in\R$ and $\dimp$ denote the packing dimension.
	\begin{enumerate}[resume, label=(\alph*)]
		\setlength\itemsep{0.8em}
		\item\label{thm1c} If $s\in (0,1)$, then $\dimp\E=1-s$.
		\item\label{thm1d} If $s\geq 1$ then $\E$ is countable and in particular $\dimp\E=0$.
		\end{enumerate}
\end{theorem}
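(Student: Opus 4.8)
The plan is to prove the four parts separately, but with parts (a) and (b) sharing a common combinatorial framework and parts (c) and (d) building on it for the dimension computations.

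For part (a), I would use the coding of the doubling map by dyadic intervals. Writing $B_m = B(x_0, r_m)$ with $r_m \leq c/(2m)$, the event that $x$ is eventually always hitting forces, for each large $m$, one of the $m$ iterates $x, Tx, \dots, T^{m-1}x$ to land in $B_m$. The key observation is that $\bigcup_{k=0}^{m-1} T^{-k}(B_m)$ is a union of at most $2^k$ preimage intervals of total measure roughly $m \mu(B_m) \leq c$, but these sets for consecutive $m$ are far from independent — however one can still run a first Borel–Cantelli-type argument on a sparse subsequence $m_j$ (say $m_j = 2^j$) along which the sets $\bigcap_{m=n}^{m_j}\bigcup_{k<m} T^{-k}(B_m)$ shrink. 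Concretely, I expect to show $\mu\bigl(\bigcap_{m=n}^{\infty}\bigcup_{k=0}^{m-1}T^{-k}(B_m)\bigr)=0$ for each fixed $n$ by exploiting that $T$ is measure-preserving and mixing with exponential rate, so that the measure of the intersection over a long dyadic block decays geometrically; summing over $n$ then gives $\mu(\E)=0$. The technical heart is controlling the overlaps between the sets $T^{-k}(B_m)$ for different $k$ and different $m$; this is where the nestedness of the $B_m$ and the self-similar structure of the dyadic partition are essential.

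For part (b), the strategy is the reverse: I want to show that for $\mu$-a.e.\ $x$ and all large $m$, the orbit segment of length $m$ hits $B_m$. Fix a large $m$ and partition $\{0,1,\dots,m-1\}$ into $\asymp m/\log m$ blocks each of length $\asymp \log m$; on each block the points $T^k x$ for $k$ in that block are, after conditioning, sufficiently decorrelated (using exponential mixing of the doubling map and the fact that $\mu(B_m)\geq c(\log m)^2/m$ so that the "local success probability" over a block of length $\log m$ is $\gtrsim (\log m)\cdot \mu(B_m)\gtrsim c(\log m)^3/m$, wait — one needs this $\times$ number of blocks to beat $\log m$: the probability of missing $B_m$ throughout an orbit of length $m$ is at most $(1-\mu(B_m)+o(\mu(B_m)))^{c m/\log m}$ per the block decomposition, which is $\leq \exp(-c' (\log m)^2/\log m \cdot (\text{const}))=\exp(-c''\log m)=m^{-c''}$ with $c''>1$ for $c$ large). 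Then Borel–Cantelli over $m$ gives that a.e.\ $x$ is eventually always hitting, so $\mu(\E)=1$. The main obstacle here is making the block-independence rigorous: the events "$T^k x \in B_m$" are not independent, and one must quantify the error from mixing carefully and check that with $B_m$ a single interval (not a union) the mixing estimate is clean — this is exactly why the doubling map, with its exact Bernoulli structure in base $2$, is the right testing ground.

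For parts (c) and (d), take $B_m = B(0,2^{-sm})$. For (d), when $s \geq 1$: an orbit of length $m$ consists of $m$ points in $[0,1]$, and for $x$ to be EAH one needs some $T^k x$ (with $k<m$) within $2^{-sm}$ of $0$; since $T^k x$ is determined by the binary digits of $x$ starting at position $k+1$, requiring $T^k x < 2^{-sm}$ forces the digits $x_{k+1},\dots,x_{k+sm}$ to all vanish. Having this for every large $m$ pins down the digit sequence of $x$ to be eventually periodic or otherwise extremely constrained; a short argument shows only countably many $x$ survive, whence $\dimp \E = 0$. For (c), with $0<s<1$, the set $\E$ should have packing dimension $1-s$; the upper bound comes from covering $\E$ at scale $2^{-sm}$ efficiently — for each large $m$, $\E$ is contained in the union over $k<m$ of the $2^k$ cylinders of length $k+sm$ forcing $T^k x$ into $B_m$, giving a box-counting bound $N_{2^{-sm}}(\E) \lesssim m\cdot 2^{(1-s)m}$, and since $\dimp$ is controlled by the upper box dimension along the subsequence of scales $2^{-sm}$ this yields $\dimp \E \leq 1-s$ (the factor $m$ is subexponential and disappears in the limit). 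For the lower bound I would construct a Cantor-type subset of $\E$ by prescribing, for a rapidly increasing sequence $m_j$, a single "planted" hit (set a window of $sm_j$ consecutive digits to zero somewhere in the first $m_j$ positions) while leaving the remaining $(1-s)$-fraction of digits free, then compute that this subset has a natural measure with the right local scaling; because we need packing (not Hausdorff) dimension, I can afford a relatively crude construction — it suffices to exhibit a set whose upper box dimension at scales $2^{-sm_j}$ is at least $1-s$. The main obstacle in (c) is the lower bound: one must verify that the "free" digits genuinely contribute dimension $1-s$ and that the forced windows, whose relative density is $s$, do not accidentally force additional structure; handling this carefully — and in particular confirming that packing dimension (which is an infimum over countable decompositions of a limsup box dimension) really does equal $1-s$ rather than dropping — is the delicate point, though the limsup-nature of packing dimension works in our favour here since we only need the box-counting estimate to be tight along \emph{some} sequence of scales.
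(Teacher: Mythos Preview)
Your arguments for parts (b) and (d) are essentially the same as the paper's. For (b) the paper also thins the orbit to times spaced by $\Delta_m \asymp \log m$, uses exponential decay of correlations for $L^1$ against $BV$ to bound $\mu(\bigcap_{k \leq m/\Delta_m} T^{-k\Delta_m}(\complement B_m))$ by $\exp(-c_0 \log m)$, and concludes by Borel--Cantelli. For (d) the paper also observes that a block of $sm \geq m$ zeros must overlap position $m-1$, forcing the tail to be identically zero.

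For part (a) your proposal has a genuine gap. You claim that mixing makes the measure of $\bigcap_{m=n}^{M} C_m$ decay geometrically along a sparse subsequence, but the sets $C_m = \bigcup_{k<m} T^{-k}(B_m)$ are \emph{highly} correlated across $m$ (nearby $C_m$'s differ only slightly), so exponential mixing gives you nothing here. The paper's route is entirely different: it first establishes a zero--one law for $\E$ (since $\E$ is almost $T$-invariant when the targets are nested), and then shows $\mu(\E)<1$ via hitting time statistics. Concretely, $C_m \subset \{\tau_{B_m} \leq m\} \subset \{\tau_{B_m} \leq c/\mu(B_m)\}$ whenever $\mu(B_m)\leq c/m$, and the HTS limit $G_{B_m}(c)=1-e^{-c}$ (or the periodic-center variant) is strictly less than $1$; hence $\limsup \mu(C_m) < 1$, so $\mu(\E)<1$, so $\mu(\E)=0$. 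Without HTS or some substitute you cannot pass from $\mu(C_m)\leq c$ (which is all the trivial estimate gives when $c\geq 1$) to $\mu(A_n)=0$.

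For part (c) both bounds have gaps. For the upper bound, your covering of $A_n$ by $C_m=\bigcup_{k<m} T^{-k}(B_m)$ is far too crude: this is a union of $\sum_{k<m} 2^k \approx 2^m$ cylinders (not $m\cdot 2^{(1-s)m}$ as you write), and in fact $C_m$ has positive Lebesgue measure bounded away from zero, hence full box dimension. Using the constraint from a \emph{single} $m$ cannot give anything better than dimension $1$. The paper instead characterises $A_n$ combinatorially via \emph{all} the constraints simultaneously: a point in $A_n$ must carry a sequence of zero-blocks of lengths $n_j$ at positions $m_{j-1}$ with $n_j\geq s m_j$ and $m_{j-1}<(1-s)m_j$; this forces at least an $s$-fraction of zeros in any initial segment, and a careful count of admissible $(m_j,n_j)$-sequences times the free-digit choices gives $N_{2^{-N}}(A_n)\lesssim N^{O(\log N)}\,2^{(1-s)N}$, whence $\overline{\dimb} A_n \leq 1-s$. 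For the lower bound, your statement that ``it suffices to exhibit a set whose upper box dimension at scales $2^{-sm_j}$ is at least $1-s$'' is false: packing dimension is an infimum of modified upper box dimensions over countable decompositions, so a large upper box dimension of $F$ alone proves nothing (a countable dense set has $\overline{\dimb}=1$ and $\dimp=0$). The paper builds the same Cantor-type set $F$ you describe but then puts the natural uniform measure on it and verifies that the \emph{upper local dimension} $\overline{d}_\mu(x)\geq 1-s$ at every point; the mass distribution principle for packing dimension then gives $\dimp F\geq 1-s$.
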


We remark that we prove the statements \ref{thm1a} and \ref{thm1b} in
the theorem above for many other dynamical systems, for instance piecewise expanding maps and some quadratic maps. See Section~\ref{section4} for more details.

Note also that for the doubling map, and $B_m = B(0,2^{-sm})$, the result
of Bugeaud and Liao \cite{BugeaudLiao} implies that $\dimh \E = \bigl(
\frac{1-s}{1+s} \bigr)^2$. Hence we have
\[
0 < \dimh \E = \Bigl(\frac{1-s}{1+s} \Bigr)^2 < \dimp \E = 1 - s < 1
\]
when $s \in (0,1)$.

\begin{theorem}[The Manneville--Pomeau map]
  \label{thmMP}
  Let $X=[0,1]$, let $\alpha > 0$ and let $g_{\alpha}\colon[0,1)\to
    [0,1)$ be the Manneville--Pomeau map given by
  \begin{equation}\label{MPmap}
    g_{\alpha}(x)=
    \begin{cases} x(1+2^{\alpha}x^{\alpha}) & \text{if } x\in [0,\frac12)\\
        2x-1 & \text{if } x\in [\frac12,1).
    \end{cases}
  \end{equation}
  Let $\mu_{\alpha}$ denote the absolutely continuous invariant
  measure for $([0,1),g_{\alpha})$, which is finite if and only if
  $\alpha \in (0,1)$. Let $\tub{B_m}_{m=1}^{\infty}$ denote a
  nested sequence of intervals with a fixed center
  which is not $0$.

  If $\alpha \in (0,1)$ then we have the following results.
  \begin{enumerate}[label=(\alph*)] 
    \setlength\itemsep{0.8em}
  \item\label{thmMPa} If $\mu_\alpha (B_m) \leq \frac{c}{m}$ for some constant $c$,
    then $\mu_\alpha (\E)=0$.
  \item\label{thmMPb} If $\mu_\alpha (B_m) \geq \frac{c (\log m)^{2 +
      \varepsilon}}{m}$ for some $\varepsilon > 0$ and any $c>0$, then
    we have $\mu_\alpha (\E)=1$.
  \end{enumerate}

  If $\alpha \geq 1$ then we have the following results.
  \begin{enumerate}[resume, label=(\alph*)]
    \setlength\itemsep{0.8em}
  \item\label{thmMPc} If $\alpha>1$ and $\mu_\alpha (B_m)\leq\frac{c}{m^{\frac {1}{\alpha} + \varepsilon}}$
    for some $\varepsilon > 0$ and any $c > 0$, then $\mu_\alpha (\E)=0$. 
    
    \noindent If $\alpha=1$ and $\mu_{\alpha}(B_m)\leq \frac{c}{m}$ for some $c>0$ then $\mu_\alpha (\E)=0$. 
  \item\label{thmMPd} If $\mu_\alpha (B_m) \geq \frac{c}{m^{\frac{1}{\alpha} -
      \varepsilon}}$ for some $\varepsilon > 0$ and any $c>0$, then
    $\mu_\alpha (\complement \E)=0$.
  \end{enumerate}
\end{theorem}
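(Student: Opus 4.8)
The plan is to pass to the first-return map of $g_\alpha$ on a region bounded away from the neutral fixed point $0$, where the induced dynamics is uniformly expanding, and to recast the eventually-always-hitting property as a hitting problem counted along returns to that region. Concretely, fix a set $Y$ which is a finite union of cylinders of $g_\alpha$, is bounded away from $0$, and has the common centre $x_0 \neq 0$ of the targets in its interior; let $F = g_\alpha^{\tau}\colon Y\to Y$ be the first-return map and $\tau\colon Y\to\N$ the return time. Then $(Y,F,\mu_\alpha|_Y)$ is a Gibbs--Markov (piecewise onto, uniformly expanding) system with finite invariant measure and exponential decay of correlations, and the return-time tail obeys $\mu_\alpha(\tub{y\in Y:\tau(y)>n})\asymp n^{-1/\alpha}$. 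Writing $S_k(y)=\sum_{i=0}^{k-1}\tau(F^i y)$ for the epoch of the $k$-th return and $N_m(y)=\#\tub{k\ge 0:S_k(y)<m}$ for the number of returns strictly before time $m$, and using that $B_m\subset Y$ for all large $m$ (the centre is not $0$) while the iterates of $g_\alpha$ outside $Y$ stay near $0$, for $\mu_\alpha$-a.e.\ $x$ — with $y$ the first visit of the orbit of $x$ to $Y$ — one has, for all large $m$,
\begin{equation*}
\tub{x, g_\alpha x,\dots, g_\alpha^{m-1}x}\cap B_m\neq\emptyset\iff F^k y\in B_m\text{ for some }0\le k<N_m(y).
\end{equation*}
Hence, up to a $\mu_\alpha$-null set, $\E$ is governed by this hitting property for $F$, and it remains to estimate the $\mu_\alpha|_Y$-measure of the corresponding set.

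The quantitative engine is the size of $N_m$. From the tail estimate one gets, for a.e.\ $y$: $N_m\asymp m$ when $\alpha\in(0,1)$ (finite mean return time); $N_m\asymp m/\log m$ when $\alpha=1$; and, for every $\varepsilon>0$, $m^{1/\alpha-\varepsilon}\le N_m\le m^{1/\alpha+\varepsilon}$ eventually when $\alpha>1$. These follow by Borel--Cantelli along $m=2^j$, using monotonicity of $m\mapsto N_m$ together with an upper deviation bound $S_k\le k^{\alpha+\varepsilon}$ and the (very thin) lower deviation bound $S_k\ge k^{\alpha-\varepsilon}$. Also, since the invariant density is bounded away from $0$ and $\infty$ on $Y$, a single return lands in $B_m$ with probability $\asymp\mu_\alpha(B_m)$. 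For the full-measure statements \ref{thmMPb} and \ref{thmMPd} I would apply the convergence Borel--Cantelli lemma: by nestedness of the $B_m$, for $M_j=2^j$,
\begin{equation*}
\complement\E\subset\limsup_{j\to\infty}\tub{x:\tub{x, g_\alpha x,\dots, g_\alpha^{M_j-1}x}\cap B_{M_{j+1}}=\emptyset},
\end{equation*}
because missing $B_m$ in $m$ steps for some $m\in[M_j,M_{j+1})$ forces missing the smaller set $B_{M_{j+1}}$ already in $M_j$ steps. On the event (of summable-in-$j$ complementary probability, by the previous estimates) that $N_{M_j}$ takes its typical value, the number of returns before time $M_j$ landing in $B_{M_{j+1}}$ has mean $\asymp N_{M_j}\mu_\alpha(B_{M_{j+1}})$, which under the hypotheses tends to infinity — like $(\log M_j)^{2+\varepsilon}$ in case \ref{thmMPb} and like a positive power of $M_j$ (up to a logarithmic factor when $\alpha=1$) in case \ref{thmMPd}. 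A second-moment (Chebyshev) estimate, in which exponential mixing of $F$ bounds the variance, then makes the $j$-th probability summable, and Borel--Cantelli gives $\mu_\alpha(\complement\E)=0$. The mildly worse exponent $2+\varepsilon$ (versus $2$ for the doubling map in Theorem~\ref{thm1}\ref{thm1b}) is the price of the sub-exponential return-time tails, which make the second-moment bound less sharp.

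For the zero-measure statements \ref{thmMPa} and \ref{thmMPc} I would instead show $\mu_\alpha(\complement\E)>0$ and upgrade by a zero--one law. Along a lacunary sequence $(m_j)$, the expected number of returns before time $m_j$ landing in $B_{m_j}$ is $\asymp N_{m_j}\mu_\alpha(B_{m_j})$, which under each hypothesis is uniformly bounded and in fact tends to $0$ in case \ref{thmMPc} (for $\alpha>1$ it is $\asymp m_j^{1/\alpha+\varepsilon'}m_j^{-1/\alpha-\varepsilon}\to0$, and for $\alpha=1$ it is $\asymp (m_j/\log m_j)\,m_j^{-1}\to0$); hence, via a Poisson-type lower bound for the number of hits in a mixing system, the event ``$\tub{x, g_\alpha x,\dots, g_\alpha^{m_j-1}x}$ misses $B_{m_j}$'' has $\mu_\alpha$-measure at least a fixed $\delta>0$ for all $j$. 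Exponential mixing of $F$ together with lacunarity give enough quasi-independence of these events for the Kochen--Stone lemma to yield $\mu_\alpha(\complement\E)>0$. Finally one checks — arguing as for Theorem~\ref{thm1}\ref{thm1a} and using nestedness of the targets — that $\complement\E$ agrees, up to a null set, with a $g_\alpha$-invariant set; since $g_\alpha$ is ergodic (and conservative when $\alpha\ge1$) with respect to $\mu_\alpha$, this forces $\mu_\alpha(\complement\E)=1$, i.e.\ $\mu_\alpha(\E)=0$. For $\alpha\in(0,1)$ the argument is essentially the one already used for the piecewise expanding case in Theorem~\ref{thm1}\ref{thm1a}.

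The main obstacle, I expect, is making the renewal and mixing estimates cooperate: one must control $N_m(y)$ and the joint law of the returns $y,Fy,\dots,F^{N_m(y)-1}y$ at the same time, even though $N_m(y)$ is itself a function of these returns, and it is exactly this coupling that produces the $\varepsilon$-losses in \ref{thmMPc}--\ref{thmMPd} and the extra logarithm in \ref{thmMPb}. A secondary difficulty is the zero--one law for $\complement\E$ when $\alpha\ge1$, where $\mu_\alpha$ is infinite and one cannot simply invoke exactness of a probability-preserving system but must argue through the finite induced system.
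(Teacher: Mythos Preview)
Your inducing strategy matches the paper's, but the implementation differs in both directions. The paper passes to the first-return map $S$ on $[\tfrac12,1)$ and sandwiches $\E\cap[\tfrac12,1)$ between sets of the form $\{x:\exists\,m_0\ \forall m\geq m_0\ \exists\,k<m:S^k(x)\in B_{R_m(x)}\}$; then, on a set $D_\delta$ of measure $\geq 1-\delta$ on which the return-time sums $R_n$ obey the deterministic growth bound ($R_n\asymp n$ for $\alpha<1$, $n^{\alpha-\kappa}\leq R_n\leq n^{\alpha+\kappa}$ for $\alpha>1$, citing \cite{GalatoloHollandPerssonZhang}), it replaces the random target $B_{R_m(x)}$ by a deterministic $C_m$ and applies the already-proved Theorem~\ref{the:sufficient} (for \ref{thmMPb}, \ref{thmMPd}) or Corollary~\ref{cor:measure0} (for \ref{thmMPc}) to the uniformly expanding system $(S,\tilde\mu_\alpha)$. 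This restriction-to-$D_\delta$ device is exactly how the paper resolves the coupling between $N_m$ and the $F$-orbit that you flag as the main obstacle; no joint-law analysis is needed. For \ref{thmMPa} the paper does not induce at all: it cites known hitting-time statistics for $g_\alpha$ itself (\cite{FFTV} via \cite{FFT1,FFT2}) and applies Corollary~\ref{cor:measure0RTS} directly. Your second-moment/Chebyshev route for \ref{thmMPb}, \ref{thmMPd} and your Poisson/Kochen--Stone route for \ref{thmMPa}, \ref{thmMPc} would also succeed, but two of your steps are superfluous: Kochen--Stone is not needed, since $\complement\E=\bigcap_n\bigcup_{m\geq n}D_m$ and $\mu(D_{m_j})\geq\delta$ along any subsequence already gives $\mu(\complement\E)\geq\delta$; and your ``Poisson-type lower bound'' for the miss probability is precisely the HTS input the paper simply imports. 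The paper's approach buys modularity---it reuses Theorem~\ref{the:sufficient} and the corollaries verbatim rather than rerunning a variance computation---while yours is more self-contained but correspondingly longer.
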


We remark that the measure $\mu_\alpha$ is equivalent to Lebesgue
measure on $[0,1]$. Hence the statements $\mu_\alpha (\E) = 0$ and
$\mu_\alpha (\complement \E) = 0$ are equivalent to the corresponding
statements involving the Lebesgue measure instead. Note also that in
the case $\alpha \in (0,1)$ after normalising the measure
$\mu_\alpha$ we can state $\mu_\alpha (\complement \E) = 0$ as the
equivalent statement $\mu_\alpha (\E) = 1$. This is not possible if
$\alpha \geq 1$, since the measure $\mu_\alpha$ is not finite in this
case.

We may also consider the Gau\ss{} map defined by $G:(0,1]\to [0,1)$
given by $G(x)=\frac{1}{x}\mod 1$. $G$ admits an absolutely continuous
invariant probability measure known as the Gauss measure which has
density $\frac{1}{\log 2}\frac{1}{1+x}$. Statement \ref{thm1b} of
Theorem~\ref{thm1} also holds true for the Gau\ss{} map and measure,
while \ref{thm1a} of Theorem~\ref{thm1} holds true in this setting
when $B_m:=[0,r_m)$. This allows us to obtain a statement about the
eventually always hitting property for maximal digits of continued
fractions expansions. Recall that every point $x\in (0,1]$ can be
written as a continued fraction, i.e.
\begin{align*}
x=\cfrac{1}{a_1(x)+\cfrac{1}{a_2(x)+\cfrac{1}{a_3(x)+ \cfrac{1}{\ddots}}}}
%=\cfrac{1}{\sqpar{\frac{1}{z_0}}_i+\cfrac{1}{\sqpar{\frac{1}{Tz_0}}_i+\cfrac{1}{\sqpar{\frac{1}{T^2z_0}}_i+ \cfrac{1}{\ddots}}}}
\end{align*}
where the $a_i(x)$'s are generated by the algorithm $a_1(x)=\lfloor\frac{1}{x}\rfloor$ and $a_j(x)=a_1(G^{j-1}(x))$. In compact notation we write $x=[a_1(x),a_2(x),\dots]$.

\begin{theorem}[Continued fractions]
	\label{ThmCF}
	Let $\mu$ denote the Gau\ss{} measure.
	\begin{enumerate}[label=(\alph*)]
		\setlength\itemsep{0.8em}
		\item\label{thmCFa} For any $c > 0$ we have
		\begin{equation*}
		  \mu \Bigl( \Bigl\{\, x\in
                      [0,1]:\exists\,m_0\in\N\,\forall m\geq
                      m_0:\max_{1\leq k\leq m} a_k(x)\geq cm \,
                      \Bigr\} \Bigr) =0.
		\end{equation*}
		\item\label{thmCFb} If $c > 0$ is sufficiently small,
                  then
		\begin{equation*}
		  \mu \biggl( \biggl\{\, x\in
                      [0,1]:\exists\,m_0\in\N\,\forall m\geq
                      m_0:\max_{1\leq k\leq m} a_k(x)\geq \frac{cm}{(\log m)^2}
                      \,\biggr\} \biggr) =1.
		\end{equation*}
		\end{enumerate}
\end{theorem}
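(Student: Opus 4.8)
The plan is to deduce Theorem~\ref{ThmCF} from the versions of parts \ref{thm1a} and \ref{thm1b} of Theorem~\ref{thm1} that hold for the Gau\ss{} map and measure, by rephrasing the conditions on the continued fraction digits as eventually always hitting conditions for shrinking intervals around the point $0$. The elementary point is that, since $a_k(x) = \lfloor 1/G^{k-1}(x)\rfloor$ is a positive integer whenever $x$ has a non-terminating continued fraction expansion, for every such $x$ and every real $t>0$ one has
\[
a_k(x) \geq t \iff a_k(x) \geq \lceil t\rceil \iff G^{k-1}(x) \in \bigl(0,\, 1/\lceil t\rceil\,\bigr].
\]
Writing $j = k-1$, it follows that the set appearing in \ref{thmCFa} coincides, up to the countable set of points with a terminating expansion, with $\E(\mathbf{B})$ for $B_m = \bigl(0,\, 1/\lceil cm\rceil\,\bigr]$, and that the set appearing in \ref{thmCFb} coincides, up to a null set, with $\E(\mathbf{B})$ for $B_m = \bigl(0,\, 1/\lceil cm/(\log m)^2\rceil\,\bigr]$ (defined for $m$ large and chosen arbitrarily for the finitely many small $m$). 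Here I use the representation $\E = \bigcup_{n}\bigcap_{m\geq n}\bigcup_{k=0}^{m-1}T^{-k}(B_m)$ from the introduction; since the sets $\bigcap_{m\geq n}\bigcup_k T^{-k}(B_m)$ increase with $n$, the set $\E$ depends only on the tail of $(B_m)$, so the finitely many small targets are irrelevant and we may assume both sequences to be nested.

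The next step is to estimate the Gau\ss{} measure of these intervals. Since the Gau\ss{} density is bounded between positive constants on $[0,1]$, we have $\mu\bigl((0,r]\bigr) = \frac{\log(1+r)}{\log 2}$, and in particular $r \leq \mu\bigl((0,r]\bigr) \leq \frac{r}{\log 2}$ for $r\in(0,1]$. For \ref{thmCFa} this gives $\mu(B_m)\leq\frac{1}{cm\log 2}$; enlarging each $B_m$ to the interval $B_m' := [0,\, 2/(cm))$, which contains $B_m$, has fixed centre $0$, and is nested, we get $\mu(B_m')\leq\frac{2}{c\log 2}\cdot\frac1m$. The Gau\ss{} map version of part \ref{thm1a} of Theorem~\ref{thm1}, which applies precisely to targets of the form $[0,r_m)$, then yields $\mu\bigl(\E(\mathbf{B}')\bigr)=0$, and since $\E(\mathbf{B})\subseteq\E(\mathbf{B}')$ we conclude $\mu\bigl(\E(\mathbf{B})\bigr)=0$; this proves \ref{thmCFa}.

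For \ref{thmCFb} I would instead shrink the targets. For all sufficiently large $m$ one has $1/\lceil cm/(\log m)^2\rceil \geq \frac{(\log m)^2}{2cm}$, so the interval $B_m$ contains $B_m'':= [0,\, \frac{(\log m)^2}{2cm})$ up to the single point $0$, and $\mu(B_m'') \geq \frac{(\log m)^2}{2cm} = \frac{1}{2c}\cdot\frac{(\log m)^2}{m}$. Choosing $c$ small enough that $\frac{1}{2c}$ exceeds the constant required to be ``sufficiently large'' in the Gau\ss{} map version of part \ref{thm1b} of Theorem~\ref{thm1}, that statement gives $\mu\bigl(\E(\mathbf{B}'')\bigr)=1$; since $\E(\mathbf{B}'')$ is contained in $\E(\mathbf{B})$ together with the countable set of points whose orbit meets $0$, we obtain $\mu\bigl(\E(\mathbf{B})\bigr)=1$, which is \ref{thmCFb}.

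Since the substantive content — the dynamical Borel--Cantelli arguments underlying parts \ref{thm1a} and \ref{thm1b} for the Gau\ss{} map — is taken as known, I do not expect any genuine obstacle here. The only points requiring care are the bookkeeping around the ceiling functions and the measure-zero discrepancies at the endpoint $0$ and at points with terminating continued fraction expansions, together with the routine checks that the comparison sequences $\mathbf{B}'$ and $\mathbf{B}''$ are nested, centred at $0$, and have the asserted rate of measure decay.
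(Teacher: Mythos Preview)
Your proposal is correct and follows essentially the same route as the paper's own argument in Section~\ref{section4}: both translate the condition $\max_{1\le k\le m} a_k(x)\ge b_m$ into an eventually-always-hitting condition for the Gau\ss{} map with targets of the form $[0,r_m)$ near the origin, estimate the Gau\ss{} measure of these intervals via the explicit density, and then invoke the Gau\ss{}-map versions of parts \ref{thm1a} and \ref{thm1b} of Theorem~\ref{thm1}. Your treatment is in fact a bit more careful than the paper's, which handles the integer-rounding via the informal $a_j(x)\sim 1/G^{j-1}(x)$ rather than your explicit use of $\lceil\cdot\rceil$ and comparison targets $\mathbf{B}'$, $\mathbf{B}''$.
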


\subsection{Structure of the paper} 

In Section~\ref{section2} we give some preliminary, general results
concerning the set of eventually always hitting points that will prove
useful later in the paper. In Section~\ref{section3} we give necessary
and sufficient conditions for $\E$ to be of full measure. In
Section~\ref{section4} we cite various results on mixing and hitting
time statistics which in conjunction with the results of
Section~\ref{section3} allows us to deduce the conclusions of
Theorem~\ref{thm1}, \ref{thmMP} and \ref{ThmCF} (except
Theorem~\ref{thm1}\ref{thm1c} and \ref{thm1d}). We also discuss
further systems for which the results of Section~\ref{section3} can be
applied. Section~\ref{section5} is dedicated to the proof of
Theorem~\ref{thm1}\ref{thm1c} and \ref{thm1d}.

\section{Preliminaries on eventually always hitting points}
\label{section2}

Recall that
\begin{equation*}
\E = \bigcup_{n=1}^{\infty} \bigcap_{m =n}^\infty \bigcup_{k=0}^{m-1} T^{-k} (B_m).
\end{equation*}
It will prove convenient to write
\begin{equation*}
\E =\bigcup_{n=1}^{\infty} A_n	
\end{equation*}
where
\begin{equation*}
A_n =\bigcap_{m = n}^\infty C_m \quad , \quad	C_m = \bigcup_{k=0}^{m-1} T^{-k} (B_m).
\end{equation*}
Note in particular that $A_n\subset A_{n+1}$ and hence
\begin{equation*}
\mu\para{\E}=\lim_{n\to\infty}\mu(A_n).
\end{equation*}
The following lemma will prove very useful for studying the measure of $\E$ when the sets $B_m$ are assumed to be nested. It states that $\E$ is an almost invariant set under $T$. More precisely, this means that the symmetric difference between $\E$ and $T^{-1}(\E)$ is a set of measure zero.

\begin{lemma}\label{Lemma1}
	Let $(X, \mu, T)$ be a measure-preserving dynamical system ($\mu$ either finite or infinite) and let $\{B_m\}_{m=1}^{\infty}$ be a nested family of shrinking targets, i.e.\ $B_m\supset B_{m+1}$ and $\mu (B_m) \to 0$. Then
	\[
	\mu \left( \E \triangle T^{-1}\left( \E \right) \right) = 0. 
	\]
\end{lemma}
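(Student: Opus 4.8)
The plan is to use the representation $\E=\bigcup_{n\geq 1}A_n$ with $A_n=\bigcap_{m\geq n}C_m$ and $C_m=\bigcup_{k=0}^{m-1}T^{-k}(B_m)$, together with the fact that both $\para{A_n}_{n\geq 1}$ and $\para{T^{-1}(A_n)}_{n\geq 1}$ are increasing sequences of sets. I will first bound $\mu\para{A_n\triangle T^{-1}(A_n)}$ and then transfer the estimate to $\E$ itself.

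For one of the two inclusions I would start from the elementary observation that
\[
C_m\setminus T^{-1}(C_m)\subseteq B_m.
\]
Indeed $T^{-1}(C_m)=\bigcup_{k=1}^{m}T^{-k}(B_m)$ already contains $\bigcup_{k=1}^{m-1}T^{-k}(B_m)$, so deleting $T^{-1}(C_m)$ from $C_m=B_m\cup\bigcup_{k=1}^{m-1}T^{-k}(B_m)$ can only leave points of $B_m$. Since $A_n\subseteq C_m$ for every $m\geq n$, and since preimages commute with intersections, this gives $A_n\setminus T^{-1}(A_n)=\bigcup_{m\geq n}\para{A_n\setminus T^{-1}(C_m)}\subseteq\bigcup_{m\geq n}\para{C_m\setminus T^{-1}(C_m)}\subseteq\bigcup_{m\geq n}B_m=B_n$, where the last equality uses that $\tub{B_m}$ is nested. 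Hence $\mu\para{A_n\setminus T^{-1}(A_n)}\leq\mu(B_n)$, which tends to $0$.

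The reverse inclusion is where measure preservation comes in, and here the possibility of an infinite measure requires one small observation: as soon as $n$ is large enough that $\mu(B_n)<\infty$ (which eventually happens because $\mu(B_m)\to 0$), we have $\mu(A_n)\leq\mu(C_n)\leq n\,\mu(B_n)<\infty$. Since $\mu\para{T^{-1}(A_n)}=\mu(A_n)$ is then finite, subtracting the finite quantity $\mu\para{A_n\cap T^{-1}(A_n)}$ from both of these equal numbers yields $\mu\para{T^{-1}(A_n)\setminus A_n}=\mu\para{A_n\setminus T^{-1}(A_n)}\leq\mu(B_n)$. Combining the two directions, $\mu\para{A_n\triangle T^{-1}(A_n)}\leq 2\mu(B_n)\to 0$.

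Finally I would pass to the union. Because the sequences $\para{A_n}$ and $\para{T^{-1}(A_n)}$ are increasing, a point belonging to $\E\triangle T^{-1}(\E)$ must lie in $A_n\triangle T^{-1}(A_n)$ for all sufficiently large $n$, so that $\E\triangle T^{-1}(\E)\subseteq\liminf_{n\to\infty}\para{A_n\triangle T^{-1}(A_n)}$. Continuity of $\mu$ from below (equivalently, Fatou's lemma for sets, which needs no finiteness assumption) then gives $\mu\para{\E\triangle T^{-1}(\E)}\leq\liminf_{n\to\infty}\mu\para{A_n\triangle T^{-1}(A_n)}=0$. The step I expect to be the real point of the argument is precisely this handling of the infinite-measure case: one cannot argue directly with $\mu\para{T^{-1}(\E)}=\mu(\E)$ since both sides may be infinite, so it is essential to perform the measure-preserving bookkeeping at the level of the finite-measure sets $A_n$ and only afterwards let $n\to\infty$. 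Everything else reduces to the set identities for $C_m$ and $T^{-1}(C_m)$ and the nestedness of the targets.
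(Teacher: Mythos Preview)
Your proof is correct and takes a genuinely different route from the paper's. The paper argues pointwise: it shows directly that any $x\in\E$ with $x\notin\bigcap_m B_m$ satisfies $T(x)\in\E$ (using nestedness to pass from $B_m$ to $B_{m-1}$), hence $\E\setminus\bigcap_m B_m\subseteq T^{-1}(\E)$; it then invokes measure preservation and the chain $\mu(\E)=\mu\bigl(\E\setminus\bigcap_m B_m\bigr)\leq\mu\bigl(T^{-1}(\E)\bigr)=\mu(\E)$ to conclude. Your argument instead works quantitatively with the filtration $(A_n)$: you exploit the set identity $C_m\setminus T^{-1}(C_m)\subseteq B_m$ to get $\mu\bigl(A_n\triangle T^{-1}(A_n)\bigr)\leq 2\mu(B_n)$, and then pass to the limit via Fatou for sets.

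The paper's version is shorter and more conceptual, but your approach buys something real in the infinite-measure case. The paper's final chain of equalities only yields $\mu\bigl(T^{-1}(\E)\setminus\E\bigr)=0$ when $\mu(\E)<\infty$; if $\mu(\E)=\infty$ the equality $\mu(\E)=\mu\bigl(T^{-1}(\E)\bigr)$ says nothing about the symmetric difference. Your decision to do the measure-preserving bookkeeping at the level of the finite-measure sets $A_n$ and only then let $n\to\infty$ is exactly what is needed to make the lemma airtight for infinite $\mu$, and your proof makes this explicit.
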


Hence, for ergodic transformations with respect to a finite $\mu$, $\E$ obeys a zero--one law. That is, either $\mu \left( \E\right) = 0$ or $\mu \left( \E\right) = 1$. For ergodic transformations with respect to an infinite $\mu$, $\E$ obeys a zero--infinity law.

\begin{proof}
	Let $x \in \E$. Then there is $m_0:=m_0(x) \in \N$ such that for all $m \geq m_0$ there is $k \in \{ 0,1,\dots,m-1\}$ with $T^k(x) \in B_m$. Actually, if $x \notin \bigcap_{m \in \N} B_m$ there is even $\tilde{m}_0:= \tilde{m}_0(x) \in \N$ such that for all $m \geq \tilde{m}_0$ there is $k \in \{1,\dots,m-1\}$ with $T^k(x) \in B_m$ (because otherwise $x=T^0(x)$ would have to be in $B_m$ for all $m$ due to $x \in \E$ and the nesting property). Since the targets are nested, we also get for all $k \neq 0$ that
	\[
	T^{k-1}(T(x)) = T^k(x) \in B_m \subseteq B_{m-1}.
	\]
	Hence, $T(x) \in \E$. So, $\E \setminus \bigcap_{m \in \N} B_m \subseteq T^{-1}\left( \E \right)$. Since $\mu\left(\bigcap_{m \in \N} B_m \right)$ is a set of measure zero by $\mu(B_m) \to 0$ and $T$ is measure-preserving, we have
	\[
	\mu \left( \E \right) = \mu \left( \E\setminus \bigcap_{m \in \N} B_m \right) \leq \mu\left( T^{-1}\left( \E \right) \right) = \mu \left( \E \right),
	\]
	which yields the claim.
\end{proof}

In \cite{Kelm} Kelmer gave the following simple conditions for $\E$ to be of measure zero or one. We repeat the proof for completeness. Note that no assumption is made on the shape of the target sets.
\begin{prop}\label{Prop1} Let $\tub{B_m}_{m=1}^{\infty}$ denote a sequence of shrinking targets in $X$. 
	  \begin{enumerate}[resume, label=(\alph*)]
	  	\item\label{prop1a} Let $\mu$ be a probability measure and assume that $\mu(\E)=1$. Then there exists a sequence $c_m\to 1$ such that $\mu(B_m)\geq \frac{c_m}{m}$.
	  	\item\label{prop1b} Let $\mu$ be an infinite measure and assume that $\mu(\E)=\infty$. Then there exists a sequence $c_m\to \infty$ such that $\mu(B_m)\geq \frac{c_m}{m}$.
	  	\end{enumerate}
%\item\label{Prop1b} Assume that there exists a $c<1$ such that for all $m$ large enough, $\mu(B_m)\leq \frac{c}{m}$. Then $\mu(\E)<1$. 
	\begin{proof}\ref{prop1a}
		By the assumption we get that
		\begin{align*}
		  \mu\para{\bigcup_{n=1}^{\infty}
                    A_n}=\lim_{n\to\infty}\mu(A_n)=\lim_{n\to\infty}\mu\para{\bigcap_{m=n}^{\infty}
                    C_m}=1
		\end{align*}
		which implies that $\mu(C_m)\to 1$ as $m\to\infty$. Now,
		\begin{align*}
		  \mu(C_m)=\mu\para{\bigcup_{k=0}^{m-1}
                    T^{-k}(B_m)}\leq \sum_{k=0}^{m-1}\mu \bigl(
                  T^{-k}(B_m) \bigr) =m\mu(B_m)
		\end{align*}
		where we used the $T$-invariance of $\mu$. Hence we see that since $\mu(C_m)\to 1$, we have $\mu(B_m)\geq \frac{c_m}{m}$ for some sequence $c_m \to 1$, because otherwise there is an $\varepsilon > 0$ and infinitely many $m$ for which $\mu (B_m) \leq \frac{1-\varepsilon}{m}$, which contradicts the estimates above.		
		%\ref{Prop1b}) is just the the negation of \ref{Prop1a}).
		% states that if $\mu(B_m)\leq \frac{c}{m}$ for any $c<1$ then $\mu(\E)<1$. Then by Lemma \ref{Lemma1} we know that $\mu(\E)=0$ since $T$ is ergodic.
		
		\noindent \ref{prop1b} The proof goes exactly like for \ref{prop1a} with the obvious adaptations.  
	\end{proof}
\end{prop}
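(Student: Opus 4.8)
The plan is to use only the decomposition $\E = \bigcup_{n=1}^{\infty} A_n$ with $A_n = \bigcap_{m \geq n} C_m$ and $C_m = \bigcup_{k=0}^{m-1} T^{-k}(B_m)$ recorded above, the trivial union bound for $\mu(C_m)$, and the $T$-invariance of $\mu$. No hypothesis on the shape of the targets will enter.

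First I would note that $A_n \subseteq C_m$ for every $m \geq n$ (since $m$ is one of the indices appearing in the intersection defining $A_n$), so $\mu(C_m) \geq \mu(A_n)$ whenever $m \geq n$; since $A_n$ increases to $\E$, letting $m \to \infty$ and then $n \to \infty$ gives $\liminf_{m\to\infty}\mu(C_m) \geq \lim_{n\to\infty}\mu(A_n) = \mu(\E)$. In case \ref{prop1a}, where $\mu(\E) = 1$ and always $\mu(C_m) \leq \mu(X) = 1$, this forces $\mu(C_m) \to 1$. Next, $T$-invariance gives $\mu(T^{-k}(B_m)) = \mu(B_m)$, hence
\[
\mu(C_m) \leq \sum_{k=0}^{m-1}\mu\para{T^{-k}(B_m)} = m\,\mu(B_m).
\]
Taking $c_m := \mu(C_m)$ we conclude $\mu(B_m) \geq c_m/m$ with $c_m \to 1$, which is \ref{prop1a}. (Equivalently, one may argue by contradiction: if no such sequence existed there would be $\varepsilon > 0$ and infinitely many $m$ with $\mu(B_m) \leq (1-\varepsilon)/m$, so $\mu(C_m) \leq 1-\varepsilon$ along a subsequence, contradicting $\mu(C_m) \to 1$.)

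For \ref{prop1b} the argument is the same word for word, with $1$ replaced by $\infty$: the hypothesis $\mu(\E) = \infty$ gives $\mu(A_n) \to \infty$, hence $\mu(C_m) \to \infty$ by the first step, and the union bound $m\,\mu(B_m) \geq \mu(C_m) =: c_m$ then yields the claim with $c_m \to \infty$. The only point needing a moment's care is the implication $\mu(A_n) \to 1$ (resp.\ $\to \infty$) $\Longrightarrow$ $\mu(C_m) \to 1$ (resp.\ $\to \infty$), which is exactly where the nesting $A_n \subseteq C_m$ for $m \geq n$ is used; everything else is the union bound together with invariance, so I do not expect any real obstacle.
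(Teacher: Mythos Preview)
Your proof is correct and follows essentially the same route as the paper's: both argue that $\mu(C_m)\to 1$ (resp.\ $\to\infty$) from $\mu(A_n)\to\mu(\E)$ and then apply the union bound $\mu(C_m)\le m\,\mu(B_m)$ using $T$-invariance. Your version is in fact slightly more explicit in two places---you spell out the inclusion $A_n\subseteq C_m$ for $m\ge n$ to justify $\mu(C_m)\to 1$, and you give the concrete choice $c_m:=\mu(C_m)$ rather than arguing by contradiction---but the argument is the same.
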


For nested sequences $\tub{B_m}_{m=1}^{\infty}$ and $T$ being ergodic we get the following
sufficient condition for $\mu(\E)=0$. The proof is just the negation
of Proposition~\ref{Prop1} followed by an application of
Lemma~\ref{Lemma1}.

\begin{cor} \label{cor:measure0}
  Assume that $T$ is ergodic and that $\tub{B_m}_{m=1}^{\infty}$ is a nested sequence of
  shrinking targets. 
  	  \begin{enumerate}[resume, label=(\alph*)]
  	  	\item\label{cor1a} Let $\mu$ be a probability measure. If there exists a $c<1$ such that $\mu(B_m)\leq \frac{c}{m}$ holds for infinitely many $m$, then $\mu(\E)=0$.
  	  	\item\label{cor1b} Let $\mu$ be an infinite measure. If there exists a $c\in\R$ such that $\mu(B_m)\leq \frac{c}{m}$ holds for infinitely many $m$, then $\mu(\E)=0$.
	\end{enumerate}
\end{cor}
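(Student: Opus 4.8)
The plan is to argue by contraposition together with Lemma~\ref{Lemma1}. Suppose toward a contradiction that the hypothesis of part \ref{cor1a} holds — so there is some $c < 1$ with $\mu(B_m) \leq c/m$ for infinitely many $m$ — but that $\mu(\E) > 0$. Since $T$ is ergodic with respect to the probability measure $\mu$ and, by Lemma~\ref{Lemma1}, the set $\E$ satisfies $\mu(\E \triangle T^{-1}(\E)) = 0$, the set $\E$ is (up to a null set) $T$-invariant; hence $\mu(\E) \in \{0,1\}$ and the assumption $\mu(\E) > 0$ forces $\mu(\E) = 1$. Now I would invoke Proposition~\ref{Prop1}\ref{prop1a}: from $\mu(\E) = 1$ there is a sequence $c_m \to 1$ with $\mu(B_m) \geq c_m/m$ for all $m$. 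But $c_m \to 1$ means that for all sufficiently large $m$ we have $c_m > c$, so $\mu(B_m) \geq c_m/m > c/m$ for all large $m$, which is incompatible with the existence of infinitely many $m$ satisfying $\mu(B_m) \leq c/m$. This contradiction yields $\mu(\E) = 0$, proving \ref{cor1a}.

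For part \ref{cor1b}, the argument is the same with the obvious modifications for the infinite-measure setting. Here one first notes that by Lemma~\ref{Lemma1} and ergodicity of $T$ with respect to the (infinite) measure $\mu$, the almost-invariant set $\E$ satisfies the zero--infinity law recorded after the lemma, so if $\mu(\E) > 0$ then $\mu(\E) = \infty$. Then Proposition~\ref{Prop1}\ref{prop1b} provides a sequence $c_m \to \infty$ with $\mu(B_m) \geq c_m/m$ for all $m$; in particular, given any fixed $c \in \R$ we have $c_m > c$ for all sufficiently large $m$, hence $\mu(B_m) > c/m$ eventually, contradicting the assumed existence of infinitely many $m$ with $\mu(B_m) \leq c/m$. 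Thus $\mu(\E) = 0$.

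I do not expect any serious obstacle here, since the corollary is, as the text already indicates, essentially a formal consequence of negating Proposition~\ref{Prop1} and then upgrading "$\mu(\E)$ is not full" to "$\mu(\E) = 0$" via the zero--one (respectively zero--infinity) law coming from Lemma~\ref{Lemma1} and ergodicity. The only point requiring a little care is the logical bookkeeping in the contrapositive: Proposition~\ref{Prop1} concludes $\mu(B_m) \geq c_m/m$ for \emph{all} $m$ with $c_m \to 1$ (or $\to \infty$), and one must check that this genuinely contradicts the existence of \emph{infinitely many} exceptional $m$ with $\mu(B_m) \leq c/m$ for a constant $c$ strictly below the limit of $c_m$ — which it does, precisely because $c_m$ exceeds $c$ from some index on. No quantitative estimates beyond this comparison are needed.
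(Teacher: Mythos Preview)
Your argument is correct and follows exactly the route indicated in the paper: negate Proposition~\ref{Prop1} and then upgrade ``not full'' to ``zero'' via the zero--one (resp.\ zero--infinity) law from Lemma~\ref{Lemma1} and ergodicity. There is nothing to add.
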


\section{Necessary and sufficient conditions for $\mu(\E)=1$}
\label{section3}

In this section we give proofs of various new necessary and sufficient
conditions for $\mu(\E)$ to be of measure zero or one.

\subsection{Necessary conditions for $\boldsymbol{\mu(\E)=1}$}

%The next result gives a weaker necessary condition for $\E$ to be of full measure. However, for this we need to assume that some information is known. 
We introduce some terminology and notation about hitting times in dynamical system. Given a set $E\subset X$, we denote by $\tau_E \colon X\to \N$ the \emph{first hitting time to $E$} which is defined by
\begin{align*}
\tau_E (x)=\inf\tub{i\in\N:T^i x\in E}.
\end{align*}
Let $E_m\subset X$ denote a sequence of sets for which $\mu(E_m)\to 0$ and define the function%. We say that $(X,\mu,T)$ has \emph{hitting time statistics} (HTS) to $E_m$ if
\begin{align*}
  G_{E_m} (t):=\limsup_{m \to \infty}\mu\para{\tub{x\in X:
      \tau_{E_m}(x)\leq\frac{t}{\mu(E_m)}}}.
\end{align*}
The next easy proposition gives a necessary condition for $\E$ to be of full measure when $G(t)<1$ for all $t\in\R$. 
% for some non-degenerate function $G\colon[0,\infty]\to [0,1]$. By non-degenerate we mean that $G$ is not the constant 0 or 1. %Commonly we get $G=0$ if $\mu(B_m)$ shrinks too fast and $G=1$ if $\mu(B_m)$ shrinks too slow. It is only for a narrow range of shrinking rates that HTS can be attained - if at all. 
%Often $\tilde{G}(t)$ is of the form $1-e^{-\theta t}$ for some $\theta\in (0,1]$ in which case we say that the system has exponential HTS. Note that in this case $\tilde{G}(t)<1$ for all $t\in\R$. 
\begin{prop} Let $\tub{B_m}_{m=1}^{\infty}$ denote a sequence of shrinking targets and assume that $G_{B_m} (t)<1$ for all $t\in\R$.
%	Let $B_m(x_0)$ denote a sequence of nested balls with fixed center $x_0\in X$. Assume that $(X,\mu,T)$ has HTS to $B_m(x_0)$.
If $\mu(\E)=1$, then for every $c\in\R$ we have $\mu(B_m)\geq \frac{c}{m}$ for all sufficiently large $m\in\N$.
		%\item\label{Prop2b} Assume that for some $c\geq 0$ we have $\mu(B_m(x_0))\leq \frac{c}{m}$ for all $m$ sufficiently large. Then $\mu(\E)=0$.
\end{prop}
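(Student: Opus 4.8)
The plan is to strengthen the estimate from Proposition~\ref{Prop1}\ref{prop1a}. There, the key bound was $\mu(C_m)\leq m\,\mu(B_m)$, coming from the crude union bound $\mu\bigl(\bigcup_{k=0}^{m-1}T^{-k}(B_m)\bigr)\leq\sum_{k=0}^{m-1}\mu(T^{-k}(B_m))$. This gave $\mu(B_m)\geq c_m/m$ with $c_m\to 1$, but the constant in front was only approaching $1$. To get $\mu(B_m)\geq c/m$ for \emph{every} $c$ and all large $m$, I need to show that the union bound overcounts by a multiplicative factor that grows without bound, and this is exactly where the hypothesis $G_{B_m}(t)<1$ for all $t$ enters. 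First I would recall, as in the proof of Proposition~\ref{Prop1}, that $\mu(\E)=1$ forces $\mu(C_m)\to 1$.

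Next I would set up the connection between $C_m$ and the hitting time $\tau_{B_m}$. Observe that
\[
C_m=\bigcup_{k=0}^{m-1}T^{-k}(B_m)=\tub{x\in X:\tau_{B_m}(x)\leq m-1}\cup B_m,
\]
so up to the negligible-in-the-limit set $B_m$ we have $\mu(C_m)\leq \mu\bigl(\tub{x:\tau_{B_m}(x)\leq m}\bigr)+\mu(B_m)$. Now suppose, for contradiction, that the desired conclusion fails: there is some $c>0$ and infinitely many $m$ with $\mu(B_m)\leq c/m$, i.e.\ $m\leq c/\mu(B_m)$. Along that subsequence,
\[
\tub{x:\tau_{B_m}(x)\leq m}\subseteq \tub{x:\tau_{B_m}(x)\leq c/\mu(B_m)},
\]
hence $\mu(C_m)\leq \mu\bigl(\tub{x:\tau_{B_m}(x)\leq c/\mu(B_m)}\bigr)+\mu(B_m)$. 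Taking $\limsup$ along this subsequence and using $\mu(B_m)\to 0$ gives $\limsup_m\mu(C_m)\leq G_{B_m}(c)<1$, contradicting $\mu(C_m)\to 1$.

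The only subtlety, and the step I would expect to require a little care, is the indexing in the $\limsup$: the function $G_{B_m}(t)$ is defined as a $\limsup$ over \emph{all} $m$, whereas I am working along a subsequence, so a priori the subsequential $\limsup$ of $\mu(C_m)$ is bounded by the subsequential $\limsup$ of $\mu(\tub{x:\tau_{B_m}(x)\leq c/\mu(B_m)})$, which is itself $\leq G_{B_m}(c)$. That inequality is in the right direction, so the argument closes; one just has to phrase it so that the subsequence, rather than the full sequence, is the object to which $\mu(C_m)\to 1$ is applied. With this in place the proof is essentially a one-paragraph negation argument built on Proposition~\ref{Prop1}, exactly as the surrounding text promises ("the next easy proposition").
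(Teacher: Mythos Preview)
Your proposal is correct and follows essentially the same route as the paper: assume for contradiction a subsequence with $\mu(B_m)\le c/m$, translate $C_m$ into a hitting-time event, use the inclusion $\{\tau_{B_m}\le m\}\subset\{\tau_{B_m}\le c/\mu(B_m)\}$ along that subsequence, and contradict $\mu(C_m)\to 1$ via $\limsup\le G_{B_m}(c)<1$. Your extra care with the $k=0$ term and the subsequence-versus-full-sequence $\limsup$ are minor refinements of exactly the same argument.
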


\begin{proof}
  To get a contradiction, fix $c\in\R$, and assume that there is a sequence $m_j \to \infty$
  such that $\mu(B_{m_j}) \leq \frac{c}{m_j}$ for all $j$. We then have
  \[
  \tub{x\in X: \tau_{B_{m_j}}(x)\leq m_j} \subset \tub{x\in X:
    \tau_{B_{m_j}}(x)\leq \frac{c}{\mu(B_{m_j})}}.
  \]
  Using this inclusion we may rewrite as follows
  \begin{align*}
    \E&=\bigcup_{n=1}^{\infty}\bigcap_{m=n}^{\infty} \tub{x\in X:
      \tub{T^i x}_{i=0}^{m-1} \cap B_m\neq
      \emptyset}\\ &=\bigcup_{n=1}^{\infty}\bigcap_{m=n}^{\infty}
    \tub{x\in X: \tau_{B_m}(x)< m}\\ &\subset
    \bigcup_{n=1}^{\infty}\bigcap_{m_j \geq n} \tub{x\in X:
      \tau_{B_{m_j}}(x)\leq m_j} \subset
    \bigcup_{n=1}^{\infty}\bigcap_{m_j \geq n} \tilde{C}_{m_j},
  \end{align*}
  where
  \[
  \tilde{C}_{m_j} = \tub{x\in X: \tau_{B_{m_j}}(x)\leq \frac{c}{\mu(B_{m_j})}}.
  \]
  Assuming that $\mu(\E)=1$, we argue as in Proposition~\ref{Prop1}
  that we must have $\mu(\tilde{C}_{m_j})\to 1$ which means that
  \begin{equation*}
    \mu\para{\tub{x\in X: \tau_{B_{m_j}}(x)\leq
        \frac{c}{\mu(B_{m_j})}}}\to 1
  \end{equation*}
  for $m_j \to\infty$. However, $\limsup_{j \to \infty}
  \mu(\tilde{C}_{m_j})\leq G_{B_m} (c)<1$ by assumption and hence we
  have a contradiction. Since this inequality is true for all $c\in\R$
  we get the desired conclusion.
\end{proof}

Again we get a sufficient condition for $\mu(\E)=0$ under additional assumptions.
\begin{cor} \label{cor:measure0RTS}
  Assume that $T$ is ergodic and that $\tub{B_m}_{m=1}^{\infty}$ is a nested sequence of
  shrinking targets and assume that $G_{B_m} (t)<1$ for all $t\in\R$. If there exists $c\in\R$ such that
  $\mu(B_m)\leq \frac{c}{m}$ for infinitely many $m$, then
  $\mu(\E)=0$.
\end{cor}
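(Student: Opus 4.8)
The plan is to argue by contradiction, combining the almost-invariance of $\E$ from Lemma~\ref{Lemma1} with the necessary condition established in the preceding proposition, exactly in parallel with how Corollary~\ref{cor:measure0} is deduced from Proposition~\ref{Prop1}. Suppose $\mu(\E) > 0$. Since $\tub{B_m}_{m=1}^{\infty}$ is nested and shrinking, Lemma~\ref{Lemma1} gives $\mu\para{\E \triangle T^{-1}(\E)} = 0$, so $\E$ is almost invariant under $T$. As $T$ is ergodic with respect to the finite measure $\mu$, almost invariant sets satisfy a zero--one law, and therefore $\mu(\E) = 1$.

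Next I would feed this into the preceding proposition: its hypothesis $G_{B_m}(t) < 1$ for all $t \in \R$ holds by assumption, so $\mu(\E) = 1$ forces $\mu(B_m) \geq c'/m$ for all sufficiently large $m$, for every $c' \in \R$. I then pick $c'$ strictly larger than the constant $c$ from the statement of the corollary, say $c' = \abs{c} + 1 > 0$. On the one hand $\mu(B_m) \geq c'/m$ for all large $m$; on the other hand, by hypothesis there is a sequence $m_j \to \infty$ with $\mu(B_{m_j}) \leq c/m_j \leq \abs{c}/m_j < c'/m_j$. Applying both inequalities to $m = m_j$ for $j$ large gives a contradiction, so $\mu(\E) = 0$.

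I do not expect any real obstacle here: the content is entirely contained in the preceding proposition and Lemma~\ref{Lemma1}, and the argument is a routine contradiction via the zero--one law. The only things to keep track of are the sign of $c$ (handled by choosing $c' = \abs{c}+1$, so that $c/m < c'/m$ along the bad subsequence regardless of whether $c$ is positive, zero, or negative) and the standing assumption that $\mu$ is a finite (probability) measure, which is the setting in which both the quantity $G_{B_m}(t)$ and the preceding proposition are formulated, and under which ergodicity yields the dichotomy $\mu(\E) \in \{0,1\}$.
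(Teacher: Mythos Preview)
Your argument is correct and follows exactly the route the paper indicates: combine the zero--one law from Lemma~\ref{Lemma1} with ergodicity, then negate the preceding proposition to reach a contradiction. There is nothing to add.
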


The condition $G_{B_m} (t)<1$ is easily satisfied in many cases. It is
interesting to note that often much more is known about $G_{B_m}
(t)$. If we assume $B_m$ to be a nested sequence of shrinking balls
with fixed center it is often known that
\begin{equation*}
  G_{B_m} (t)=\lim_{m\to\infty}\mu\para{\tub{x\in X:
      \tau_{B_m}(x)\leq\frac{t}{\mu(B_m)}}}
\end{equation*}
exists and is non-degenerate which means that $G_{B_m}(t)$ takes at
least one value different than 0 or 1. This property is known as the
system having \emph{hitting time statistics} (HTS) to $B_m$. Among
these systems, many have exponential HTS to $B_m$ meaning that
$G_{B_m} (t)=1-e^{-t}$. It is from the rich theory of HTS for
dynamical systems that we borrow in order to prove
Theorem~\ref{thm1}\ref{thm1a}, \ref{thmMP}\ref{thmMPa} and \ref{ThmCF}\ref{thmCFa}. We elaborate on this point in
Section~\ref{section4}.

\subsection{Sufficient conditions for $\boldsymbol{\mu(\E)=1}$}

Let $X\subset\R$ in our probability measure preserving system
$(X,T,\mu)$ and let $B_m = B(y_m, r_m)$ be a sequence of balls in $X$.
%We are interested in conditions on $\mu (B_k)$ which guarantee that $\mu (\E) = 1$. 
We consider the $L^1$ and $BV$ norms of functions on $f \colon X \to \R$, defined by
\begin{align*}
  \lVert f \rVert_1 &= \int |f| \, \mathrm{d} \mu, \\
  \lVert f \rVert_{BV} &= \var f + \lVert f \rVert_1,
\end{align*}
where $\var f$ denotes the total variation of $f$.

We say that correlations decay as $p \colon \N \to \R$
for $L^1$ against $BV$, if
\begin{equation} \label{eq:correlation}
\biggl| \int f \circ T^n g \, \mathrm{d} \mu - \int f \, \mathrm{d}
\mu \int g \, \mathrm{d} \mu \biggr| \leq \lVert f \rVert_1 \lVert g
\rVert_{BV} p (n)
\end{equation}
holds for all $n$ and all functions $f$ and $g$ with $\lVert f \rVert_1$, $\lVert g \rVert_{BV}<\infty$.

\begin{theorem} \label{the:sufficient}
	Suppose that correlations decay as $p$ for $L^1$ against $BV$.
	
	If $p$ satisfies
	\begin{equation}\label{expmix}
	p(n) \leq C e^{- \tau n}
	\end{equation}
	for some $\tau > 0$, then $\mu (\E) = 1$
	provided
	\[
	\mu (B_m) \geq \frac{c (\log m)^2}{m}
	\]
	for some $c > \tau^{-1}$ and all sufficiently large $m$.  In
	particular $\mu (\E) = 1$ provided $\mu (B_m)
	\geq \frac{c (\log m)^2 h(m)}{m}$ for any $c > 0$ and any function $h$ for which $h(m)\to\infty$ as $m\to\infty$.
	
	If $p$ satisfies
	\begin{equation}\label{polmix}
	p(n) \leq \frac{C}{n^t}
	\end{equation}
	for some $t > 0$, then $\mu (\E) = 1$
	provided
	\[
	\mu (B_m) \geq \frac{c}{m^a}
	\]
	for any $c > 0$ and $a < \frac{t}{1+t}$.
\end{theorem}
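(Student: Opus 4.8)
The plan is to estimate the measure of the complement of $\E$ using the Borel--Cantelli lemma applied to the sets $C_m^c = X \setminus \bigcup_{k=0}^{m-1} T^{-k}(B_m)$ along a carefully chosen subsequence $m_j$. Since $\complement\E = \bigcap_{n=1}^\infty \bigcup_{m=n}^\infty C_m^c$, it suffices to show $\sum_j \mu(C_{m_j}^c) < \infty$ for some subsequence along which the $C_{m_j}$ still ``cover'' all large $m$ in the sense needed; more precisely, because $\mathbf{B}$ is nested, if $T^k(x) \in B_{m_{j+1}}$ for some $k < m_j$ then $T^k(x) \in B_m$ for all $m_j \le m \le m_{j+1}$, so controlling the complement along $m_j$ with $m_{j+1}/m_j$ bounded gives control for all $m$. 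Thus the real task is to bound $\mu(C_m^c)$ from above, and the exponential-mixing and polynomial-mixing cases will be handled by the same scheme with different choices of the subsequence.

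First I would set up the covering estimate. Fix $m$ and let $\ell = \ell(m)$ be an integer ``gap'' to be chosen. Consider the thinned collection of return times $k \in \{0, \ell, 2\ell, \dots\}$ with $k < m$, giving roughly $N = m/\ell$ indices, and observe $C_m^c \subset \bigcap_{j=0}^{N-1} T^{-j\ell}(B_m^c)$, so $\mu(C_m^c) \le \mu\big(\bigcap_{j=0}^{N-1} T^{-j\ell}(B_m^c)\big)$. Now I would estimate this intersection by a telescoping/conditioning argument using the decay of correlations: writing $f = \mathbbm{1}_{B_m^c}$ (which has $\|f\|_1 \le 1$) and $g = \mathbbm{1}_{B_m^c} \cdot (\text{product of earlier factors pushed forward})$, the key point is that $B_m^c$ is an interval complement, hence $\mathbbm{1}_{B_m^c}$ has bounded variation ($\var \le 2$) and more importantly the indicator of a finite union of intervals coming from preimages under the relevant maps still has controlled $BV$ norm. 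Applying \eqref{eq:correlation} repeatedly with lag $\ell$ yields something like
\[
\mu\Big(\bigcap_{j=0}^{N-1} T^{-j\ell}(B_m^c)\Big) \le \prod_{j=0}^{N-1}\big(\mu(B_m^c) + (\text{error involving } p(\ell) \text{ and } BV \text{ norms})\big),
\]
and since $\mu(B_m^c) = 1 - \mu(B_m) \le e^{-\mu(B_m)}$, the main term is $\le e^{-N\mu(B_m)} = e^{-m\mu(B_m)/\ell}$. The hard part will be controlling the accumulated $BV$-norm errors: each conditioning step multiplies in a factor like $\|g_j\|_{BV} p(\ell)$, and $\|g_j\|_{BV}$ can a priori grow with the number of intervals in the preimage, so I would either (i) choose $\ell$ large enough that $p(\ell)$ beats this growth, or (ii) argue more carefully that at each step one only pays a bounded multiplicative constant because one conditions on a single new interval $B_m^c$ whose $BV$ norm is $O(1)$ while the ``history'' enters only through its $L^1$ norm. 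Getting this bookkeeping right is the crux of the whole argument.

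With the estimate $\mu(C_m^c) \lesssim e^{-m\mu(B_m)/\ell} + (\text{error})$ in hand, I would finish by choosing parameters. In the exponential case \eqref{expmix}, take $\ell = \ell(m) \asymp \log m$ so that $p(\ell) \le C m^{-\tau\ell/\log m}$ is polynomially small in $m$; then with $\mu(B_m) \ge c(\log m)^2/m$ and $c > \tau^{-1}$ we get $m\mu(B_m)/\ell \gtrsim c\log m$, so the main term is $\lesssim m^{-c}$ with an exponent one can push past $1$ by the choice of $c$ (and the ``in particular'' statement with an extra $h(m) \to \infty$ follows by letting $c$ be as large as needed). Summing over a subsequence $m_j$ with $m_{j+1}/m_j$ bounded (e.g.\ $m_j \asymp j^2$, or a geometric sequence — any choice making $\sum_j m_j^{-c} < \infty$ while keeping ratios bounded) gives $\mu(\complement\E) = 0$ by Borel--Cantelli plus the nesting reduction. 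In the polynomial case \eqref{polmix}, with $\mu(B_m) \ge c m^{-a}$ and $a < t/(1+t)$, I would choose $\ell = \ell(m) \asymp m^b$ for a suitable $b \in (a, 1)$ with $b(1+t) < t$ (possible exactly because $a < t/(1+t)$): then the main term is $\le e^{-c m^{1-a-b}}$, which is stretched-exponentially small, while the error term is governed by $p(\ell) \asymp m^{-bt}$ times at most $N \asymp m^{1-b}$ factors and a polynomial $BV$ blow-up, and the condition $b(1+t) > 1$ (also arrangeable) makes the total error summable along a polynomially growing subsequence. In both cases the nesting of $\mathbf{B}$ via Lemma~\ref{Lemma1}'s style of reasoning is what lets us pass from the subsequence back to all $m$, and the remaining verification is the routine arithmetic of matching exponents.
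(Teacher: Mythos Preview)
Your core scheme---thin the intersection $\bigcap_{k<m} T^{-k}(B_m^c)$ to times $0, \ell, 2\ell, \ldots$ with gap $\ell = \ell(m)$, then iterate the correlation bound---is exactly the paper's approach. Two points of divergence deserve comment.

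First, the subsequence-plus-nesting reduction is unnecessary and in fact introduces an assumption the theorem does not carry: the statement does not require the $B_m$ to be nested (the paper remarks on this explicitly just after the theorem). The paper simply proves $\sum_{m=1}^\infty \mu(D_m) < \infty$ directly, where $D_m = \bigcap_{k=0}^{m-1} T^{-k}(\complement B_m)$; under the stated hypotheses this full sum already converges, so there is no need to pass to a subsequence of $m$'s or to invoke nesting. (Your subsequence idea, done carefully, would buy a genuine improvement---any $c>0$ would suffice in the exponential case---but only under an extra nesting hypothesis, so it proves a different theorem.)

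Second, your option (ii) for the $BV$ bookkeeping is the correct one and is precisely what the paper does, isolated as a short lemma: inductively peel off one factor at a time, using that the correlation inequality \eqref{eq:correlation} pairs the $BV$ norm of the single peeled factor (here $\lVert 1_{\complement B_m} \rVert_{BV} \le 3$) against only the $L^1$ norm of the remaining product. This yields cleanly
\[
\mu(D_m) \le \bigl(1 - \mu(B_m) + 3 p(\ell)\bigr)^{\lfloor m/\ell \rfloor} \le \exp\Bigl(\tfrac{m}{\ell}\, 3p(\ell) - \tfrac{m}{\ell}\,\mu(B_m)\Bigr),
\]
with no accumulating $BV$ blow-up; your option (i) is not needed. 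In the exponential case take $\ell = K \log m$ with any $K > \tau^{-1}$. In the polynomial case your constraints on $b$ are tangled (the condition ``$b(1+t)<t$'' plays no role, and ``$b\in(a,1)$'' is not the relevant range); the paper simply takes $\ell = m^{1/(1+t)}$, which makes the error contribution bounded by $3C$ and the main term $\exp(-c\, m^{t/(1+t)-a})$, summable over all $m$ whenever $a < \tfrac{t}{1+t}$.
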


Note that do not require that the balls are nested in Theorem~\ref{the:sufficient}.
We will need the following lemma.

\begin{lemma} \label{lem:higherordercorrelation}
	Suppose that \eqref{eq:correlation} holds for all $n$ and functions
	$f$ and $g$. If $f_k \colon X \to [0, \infty)$ and $n_k \in
	\N$, then
	\begin{align*}
	\int \prod_{k=1}^n f_k \circ T^{n_1 + \cdots + n_k} \, \mathrm{d} \mu &\leq \int f_n \, \mathrm{d} \mu \prod_{k=1}^{n-1} \biggl( \int f_k \, \mathrm{d} \mu + p(n_{k+1}) \lVert f_k \rVert_{BV} \Biggr)\\
	&\leq \prod_{k=1}^{n} \biggl( \int f_k \, \mathrm{d} \mu + p(n_{k+1}) \lVert f_k \rVert_{BV} \Biggr).
	\end{align*}
\end{lemma}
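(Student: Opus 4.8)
The plan is to prove the inequality by induction on $n$, after first using the $T$-invariance of $\mu$ to bring the integral into a normal form in which the first factor carries no composition. Concretely, writing $n_2 + \cdots + n_k$ for an empty sum when $k=1$, each factor satisfies $f_k \circ T^{n_1 + \cdots + n_k} = (f_k \circ T^{n_2 + \cdots + n_k}) \circ T^{n_1}$, so that $\prod_{k=1}^n f_k \circ T^{n_1 + \cdots + n_k} = \bigl( \prod_{k=1}^n f_k \circ T^{n_2 + \cdots + n_k} \bigr) \circ T^{n_1}$ by the pointwise identity $\bigl(\prod_k \phi_k\bigr)\circ T^j = \prod_k(\phi_k\circ T^j)$. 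Since $\mu$ is $T$-invariant and the integrand is nonnegative, integrating removes the outer $T^{n_1}$, and the $k=1$ term becomes simply $f_1$. Hence it suffices to prove the following \textbf{Claim}: for $f_1,\dots,f_n\ge 0$ and $n_2,\dots,n_n\in\N$,
\[
\int f_1 \prod_{k=2}^n f_k \circ T^{n_2 + \cdots + n_k} \, \mathrm{d}\mu \;\le\; \Biggl( \prod_{k=1}^{n-1} \Bigl( \textstyle\int f_k \, \mathrm{d}\mu + p(n_{k+1}) \lVert f_k \rVert_{BV} \Bigr) \Biggr) \int f_n \, \mathrm{d}\mu .
\]

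For $n=1$ the Claim is the trivial equality $\int f_1\,\mathrm{d}\mu = \int f_1\,\mathrm{d}\mu$. For the inductive step, factor $T^{n_2}$ out of the product over $k\ge 2$, writing $\prod_{k=2}^n f_k \circ T^{n_2 + \cdots + n_k} = F \circ T^{n_2}$ with $F := f_2 \prod_{k=3}^n f_k \circ T^{n_3 + \cdots + n_k} \ge 0$. Then, applying the correlation hypothesis \eqref{eq:correlation} with the $L^1$-function $F$ and the $BV$-function $f_1$,
\[
\int f_1 \prod_{k=2}^n f_k \circ T^{n_2 + \cdots + n_k} \, \mathrm{d}\mu = \int (F \circ T^{n_2})\, f_1 \, \mathrm{d}\mu \le \int F \, \mathrm{d}\mu \int f_1 \, \mathrm{d}\mu + \lVert F \rVert_1 \lVert f_1 \rVert_{BV}\, p(n_2) .
\]
Because $F\ge 0$ we have $\lVert F\rVert_1 = \int F\,\mathrm{d}\mu$, so the right-hand side equals $\bigl( \int f_1\,\mathrm{d}\mu + p(n_2)\lVert f_1\rVert_{BV}\bigr)\int F\,\mathrm{d}\mu$. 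Now $\int F\,\mathrm{d}\mu$ is precisely the left-hand side of the Claim for the $n-1$ functions $f_2,\dots,f_n$ with indices $n_3,\dots,n_n$, so the inductive hypothesis applies to it and the two estimates multiply to give the Claim for $n$. The second inequality in the statement is then immediate, since $p\ge 0$ (which may be assumed, $p$ occurring only as an upper bound), so adjoining the $k=n$ factor only enlarges the right-hand side.

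I do not expect a genuine obstacle here: the whole content is bookkeeping — correctly propagating the telescoping index sums $n_2 + \cdots + n_k$ through the factorizations of composed products — together with the one point that is actually used, namely that nonnegativity of the $f_k$ converts the $L^1$-norm $\lVert F\rVert_1$ in the error term of \eqref{eq:correlation} into $\int F\,\mathrm{d}\mu$, so that each step's error folds multiplicatively into the product rather than accumulating additively.
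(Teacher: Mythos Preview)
Your proof is correct and follows essentially the same approach as the paper: peel off $f_1$ using the correlation bound \eqref{eq:correlation}, use nonnegativity to rewrite $\lVert F\rVert_1$ as $\int F\,\mathrm{d}\mu$ so the error folds in multiplicatively, and iterate. The paper's proof is more terse (it simply displays the one-step inequality and says ``by induction''), while you make the $T$-invariance normalisation and the inductive bookkeeping explicit, but the argument is the same.
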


\begin{proof}
	Note that since $f_k \geq 0$, we have $\int f_k \, \mathrm{d} \mu =
	\lVert f_k \rVert_1$.  By \eqref{eq:correlation}, we have
	\[
	\int \prod_{k=1}^n f_k \circ T^{n_1 + \cdots + n_k} \, \mathrm{d}
	\mu \leq \biggl( \int f_1 \mathrm{d} \mu + p(n_2) \lVert f_1 \rVert_{BV}
	\biggr) \int \prod_{k=2}^n f_k \circ T^{n_1 + \cdots + n_k} \,
	\mathrm{d} \mu,
	\]
	and the first inequality follows by induction. The second inequality then follows trivially.
\end{proof}

\begin{proof}[Proof of Theorem~\ref{the:sufficient}]
	Recall that
	\[
	\E = \bigcup_{n=1}^{\infty} A_n,
	\]
%	where
%	\[
%	A_m = \bigcap_{n = m}^\infty C_n, \qquad C_n = \bigcup_{k=0}^n
%	T^{-k} (B_n).
%	\]
%	Clearly $A_m \subset A_{m+1}$. 
	We prove that $\mu (A_n) \to 1$ as $n
	\to \infty$ which is equivalent to $\mu (\complement A_n) \to 0$. We
	can write $\complement A_n$ as
	\[
	\complement A_n = \bigcup_{m=n}^\infty D_m, \qquad \text{where } D_m
	= \bigcap_{k=0}^{m-1} T^{-k} (\complement B_m).
	\]
	Hence, it is sufficient to prove that
	\begin{equation} \label{eq:summable}
	\sum_{m=1}^\infty \mu (D_m) < \infty.
	\end{equation}
	
	We will now bound $\mu (D_m)$ from above. Let $\Delta_m > 1$. We
	have that
	\[
	\mu (D_m) \leq \mu (\tilde{D}_m), \qquad \text{where } \tilde{D}_m =
	\bigcap_{1 \leq k \leq m/\Delta_m} T^{-\Delta_m k} (\complement
	B_{m}).
	\]
	Then, we have by Lemma~\ref{lem:higherordercorrelation} that
	\begin{align*}
	  \mu (\tilde{D}_m) & = \int \prod_{1 \leq k \leq m/\Delta_m}
          1_{\complement B_{m}} \circ T^{\Delta_m k} \,
          \mathrm{d} \mu \\ &\leq \prod_{1 \leq k \leq m/\Delta_m}
          \biggl( \mu (\complement B_{m}) + p(\Delta_m)
          \lVert 1_{\complement B_{m}} \rVert_{BV} \biggr).
	\end{align*}
	Since $B_m$ are balls, we have $\var 1_{\complement B_{m}}
	\leq 2$ and $\lVert 1_{\complement B_{m}} \rVert_1 \leq
	1$. Hence we have $\lVert 1_{\complement B_{m}} \rVert_{BV} \leq
	3$ and we get
	\begin{align*}
	  \mu (\tilde{D}_m) &\leq \prod_{1 \leq k \leq m/\Delta_m }
          \biggl(1 - \mu (B_{m}) + 3 p(\Delta_m) \biggr) \\ &
          = \exp \biggl( \sum_{1 \leq k \leq m /\Delta_m} \log \bigl(1
          - \mu (B_{m}) + 3 p (\Delta_m) \bigr) \biggr) \\ &
          \leq \exp \biggl( \frac{m}{\Delta_m} 3p(\Delta_m) -\sum_{1
            \leq k \leq m/\Delta_m} \mu (B_{m}) \biggr),
	\end{align*}
	where the last inequality holds since $\log (1+x) \leq x$.
	
	Assume now that $p (n) \leq C e^{-\tau n}$. We prove that $\mu
	(\E) = 1$ if for all sufficiently large $m$, we
	have $\mu (B_m) \geq \frac{c (\log m)^2}{m}$ where $c > \tau^{-1}$.
	
	Take $\Delta_m = K \log m$ with any $K > \tau^{-1}$. Then
	\[
	\frac{m}{\Delta_m}3p(\Delta_m) \to 0 \qquad
	\text{as } m \to \infty,
	\]
	so we can assume $(m/\Delta_m)3p(\Delta_m)<1$ if $m$ is
	large.
	
	We assume that we have $\mu (B_m) \geq \frac{c (\log m)^2}{m}$ for all
	$m$. The proof also works with obvious changes if this is only the
	case for all large enough $m$. We have
	\[
	  \sum_{1 \leq k \leq m/\Delta_m} \mu (B_{m}) \geq
          \frac{m}{\Delta_m} \frac{c (\log m)^2}{m} = \frac{c}{K}
          \log m.
	\]
	
	Taken together, these two estimates give us the estimate
	\[
	\mu (\tilde{D}_m) \leq \exp \Bigl(1 - \frac{c}{K} \log m
        \Bigr),
	\]
	which implies that $\mu (\tilde{D}_m)$ is summable if $\frac{c}{K}
	> 1$. Since we may take $K$ as close to $\tau^{-1}$ as we like, we
	can make $\frac{c}{K} > 1$ as long as $c > \tau^{-1}$.
	
	We now consider the case that $p(n) \leq C/n^t$ and $\mu (B_m) \geq
	\frac{c}{m^a}$. Let $\Delta_m = m^\frac{1}{1 + t}$. Then
	\[
	\sum_{1 \leq k \leq m/\Delta_m} 3 p (\Delta_m) \leq \frac{m}{\Delta_m}
	\frac{3C}{\Delta_m^t} = 3C.
	\]
	Hence
	\[
	\mu (\tilde{D}_m) \leq \exp \biggl(3C - \sum_{1 \leq k \leq
          m^\frac{t}{1+t}} \frac{c}{m^a} \biggr) = \exp \Bigl( 3C - c
        m^\beta \Bigr),
	\]
	where $\beta = \frac{t}{1+t} - a$. If $a < \frac{t}{1+t}$, then
	$\beta > 0$ and $\mu (\tilde{D}_m)$ is summable, which proves
	\eqref{eq:summable}.
\end{proof}

We note that the sufficient condition obtained by Kelmer \cite{Kelm} in the setting of discrete time homogeneous flows acting on a finite volume quotient of $\mathbbm{H}^n$ is slightly better than what we get in our setting. More precisely, \cite[Theorem 2]{Kelm} states that in the mentioned setting
\begin{equation*}
	\sum_{j=0}^{\infty} \frac{1}{2^j\mu(B_{2^j})}<\infty \qquad\Rightarrow\qquad \mu(\E)=1.
\end{equation*} 
Inserting either $\mu (B_m) \geq \frac{c (\log m)^2}{m}$ or $\mu (B_m) \geq \frac{c}{m^a}$ in the above both result in convergent sums. A bound like $\mu (B_m) \geq \frac{c (\log m)}{m}$ or lower would be a sharper bound compared to that of Kelmer. However, the settings are very different and it is not clear what an optimal bound would look like in either setting.
\section{Application to examples}\label{section4} 

Many systems are known to have either polynomial or exponential decay
of correlation for $L^1$ against $BV$ in the sense of \eqref{expmix}
and \eqref{polmix}.
Examples of exponential decay includes $T$ being a piecewise expanding
interval map and $\mu$ being a Gibbs measure, $T$ being a quadratic
map for a Benedicks--Carleson parameter and $\mu$ being the absolutely
continuous invariant measure, \cite{liveranietal,young}.  Hence, for
these systems, $\mu(\E) = 1$ whenever $\mu (B_m) \geq \frac{c (\log m)^{2}}{m}$ for some $c>0$ sufficiently large.

Hitting time statistics is known for many interesting dynamical systems. Since HTS is often true for systems with sufficiently nice mixing properties, our necessary and sufficient conditions tend to hold for many of the same systems.
Examples of HTS in dynamics include transitive Markov chains, Axiom A diffeomorphisms, uniformly expanding maps of the interval, non-uniformly hyperbolic maps, partially hyperbolic dynamical systems and toral automorphisms. Hence, for these systems $\mu(\E)=0$ whenever $\mu(B_m)\leq \frac{c}{m}$ for some $c\in\R$. For a comprehensive overview of these results and references, see \cite[Chapter 5]{Lucarini}. Furthermore, \cite{FFT1, FFT2} establish a direct connection between HTS and so-called extreme value theory meaning that many HTS results can be obtained simply by translating known extreme value laws. See again \cite{Lucarini} for an overview of such results. 

In the following subsections we describe in more detail how Theorem~\ref{thm1}, \ref{thmMP} and \ref{ThmCF} are deduced.

\subsection{The doubling map}
HTS is known to hold for the doubling map. The precise statement is as follows. Let $r_m\in \R$ be a sequence and set $E_m=B(y,r_m)$, i.e.\ the ball with center $y\in [0,1]$ and radius $r_m$. For any sequence $r_m\to 0$ we have 
\begin{equation*}
G_{E_m} (t)=\begin{cases}
1-e^{-t} & \text{if } y \text{ is not a periodic point}\\
1-e^{-\para{1-\frac{1}{2}}^p t} & \text{if } y \text{ is a periodic point of prime period } p\in\N.
\end{cases}
\end{equation*}
This explicit result can be deduced by applying \cite[Theorem~2]{FFT1}
(along with \cite[p.7]{FFT2}) to
\cite[Corollary~4.2.11]{Lucarini}. Hence Theorem~\ref{thm1}\ref{thm1a}
follows directly from Corollary~\ref{cor:measure0RTS}.

The doubling map with Lebesgue measure is a well-known example of a
piecewise expanding interval map with a Gibbs measure. Hence it is exponentially mixing for $L^1$ against $BV$ \cite{liveranietal} and
Theorem~\ref{thm1}\ref{thm1b} follows directly from
Theorem~\ref{the:sufficient}.

\subsection{The Manneville--Pomeau map}

Here we explain how Theorem~\ref{thmMP} follows from our results. We begin with Theorem~\ref{thmMP}\ref{thmMPa}.

For the Manneville--Pomeau maps the following is known regarding
hitting time statistics in the case when $\alpha < 1$, that is when
the invariant measure $\mu_\alpha$ is finite. Let $E_m=B(y,r_m)$ as
above, then for any sequence $r_m\to 0$ we have
\begin{equation*}
G_{E_m} (t)=\begin{cases}
1-e^{-t} & \text{if } y \text{ is not a periodic point}\\
1-e^{-\bigl(1-\frac{1}{\num{Dg_{\alpha}^p (y)}}\bigr) t} & \text{if } y \text{ is a periodic point of prime period } p \\
0 & \text{if } y=0,
\end{cases}
\end{equation*}
where $Dg_{\alpha}^p (y)$ denotes the derivative of $g_{\alpha}^p$ at
the point $y$. This follows again by applying \cite[Theorem~2]{FFT1}
(along with \cite[p.7]{FFT2}) to \cite[Theorem~1 and 2]{FFTV}. Hence
Theorem~\ref{thmMP}\ref{thmMPa} follows directly from
Corollary~\ref{cor:measure0RTS}. Note that
Theorem~\ref{thmMP}\ref{thmMPa} actually also holds for balls centered
at 0 as well.

We proceed to deducing Theorem~\ref{thmMP}\ref{thmMPb}, \ref{thmMPc} and \ref{thmMPd}.
Note first that the case $\alpha=1$ in Theorem~\ref{thmMP}\ref{thmMPc} follows directly from Corollary~\ref{cor:measure0}\ref{cor1b}. 

The Manneville--Pomeau map is not known to have decay of correlations
for $L^1$ against $BV$. However, through a technique known as
\emph{inducing}, explained below, we can obtain results almost as
strong as if it had exponential decay of correlations with respect to said norms. We let $S
\colon [\frac{1}{2}, 1) \to [\frac{1}{2}, 1)$ be the first return map
    of $g_\alpha$ to the interval $[\frac{1}{2}, 1)$.  The structure
      of the map $S$ is illustrated in Figure~\ref{fig:MP}.

Given a point $x \in [\frac{1}{2}, 1)$, there is then a sequence $R_k
(x)$ such that
\[  
S^k (x) = g_\alpha^{R_k (x)} (x).
\]
The sequence $R_k (x)$ satisfies
\[
R_k (x) = \sum_{j=0}^{k-1} R (S^j (x)),
\]
where $R$ is the return time $R(x) = \min \{\, n \geq 1 : g_\alpha^n
(x) \in [\frac{1}{2}, 1) \,\}$.

\begin{figure}
	\begin{centering}
		\includegraphics[width=0.6\textwidth]{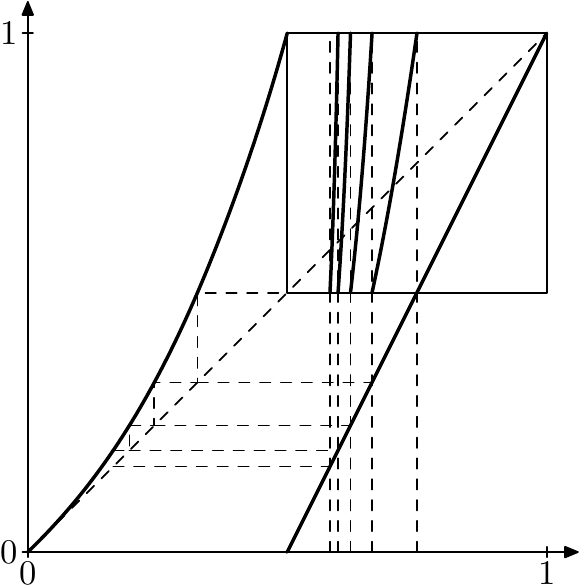}
	\end{centering}
	\caption{The Manneville--Pomeau map and its first return map.}
	\label{fig:MP}
\end{figure}

The return map $S$ is uniformly expanding and it follows by the paper
of Rychlik \cite{Rychlik} that it has
exponential decay of correlations for $L^1$ against $BV$. Hence we may
apply Theorem~\ref{the:sufficient} to $S$.

The absolutely continuous invariant measure $\mu_\alpha$ of $g_\alpha$
is finite on $[\frac{1}{2}, 1)$ and we write $\tilde{\mu}_{\alpha}$ for the normalized measure, i.e.\
  $\tilde{\mu}_{\alpha} ([\frac{1}{2}, 1)) = 1$. The measure $\mu_\alpha$ is
    finite on $[0,1)$ if and only if $\alpha \in (0,1)$. In fact, the
      density $h$ of the measure $\mu_\alpha$ is a decreasing and
      positive function, and it satisfies $h(x) \sim x^{-\alpha}$ when
      $x$ is close to zero \cite{Thaler}.

%We first consider the case when $\mu_{\alpha}$ is finite on $[0,1)$, i.e.\ when $\alpha\in (0,1)$.

%We let $y$ be the centre of the balls $B_m$, and assume first that $y
%\geq \frac{1}{2}$. The results for a general $y \in [0,1)$ can be
%deduced for this case as will be explained below.

Using the first return map enables us to estimate $\tilde{\mu}_{\alpha} (\E(\mathbf{B}) \cap [\frac{1}{2},1))$ for $\mathbf{B}=\tub{B(y,r_m)}_{m=1}^{\infty}$ with center $y\in[\frac12,1)$ depending on the rate of shrinking of $r_m$. At the end of the section we argue why this is sufficient.
%We begin by proving that if $\mathbf{B}=B(y,r_m)$ with $y\in [\frac12,1)$, then we have $\tilde{\mu}_{\alpha} (\E(\mathbf{B}) \cap [\frac{1}{2},1)) = 0$ or $\tilde{\mu}_{\alpha} (\complement\E(\mathbf{B}) \cap [\frac{1}{2},1)) = 0$,
%depending on the rate of shrinking of $r_m$. This will be achieved by using the first return map $S$. Afterwards we will demonstrate how the general statement follows from this.
%Namely, assume $y\in [\frac12,1)$ and $\tilde{\mu}_{\alpha} (\E(\mathbf{B}) \cap [\frac{1}{2},1)) = 0$. Since Then $\mu_{\alpha}(\E(B))<1$. 
%This is enough, since the orbit
%under $g_\alpha$ of any point in $(0,1)$ will eventually enter the
%interval $[\frac{1}{2},1)$.
For now set $B_m=B(y,r_m)$ and assume that $y\in[\frac12,1)$. By using the fact that $g_{\alpha}^n(x)\in B_m$ can only happen along the subsequence $n_k:=R_k(x)$, a short argument gives the inclusions
\begin{equation}
\label{A1A2inclusion}
	A_1 \subset \E \cap \Big[\frac{1}{2},1\Big ) \subset A_2
\end{equation}
where
\begin{equation*}
A_1 = \Bigl\{ \, x
\in \Bigl[ \frac{1}{2},1 \Bigr) : \exists\,m_0\,\forall\, m\geq m_0\, \exists\, k < m : S^k (x) \in B_{R_{m+1}(x)} \, \Bigr\},
\end{equation*}
and
\begin{equation*}
 A_2 = \Bigl\{\, x \in
    \Bigl[ \frac{1}{2},1 \Bigr) : \exists\,m_0\,\forall\, m\geq m_0\, \exists\, k < m :
      S^k (x) \in B_{R_m(x)} \, \Bigr \},
\end{equation*}
The small argument verifying \eqref{A1A2inclusion} is left to the reader.

Assume first that $\mu_{\alpha}$ is finite, i.e.\ $\alpha\in (0,1)$. We will use \eqref{A1A2inclusion} together with the following
estimate of $R_n$ from
\cite[Theorem~2.19]{GalatoloHollandPerssonZhang}. There exists a constant $C > 1$ such that the set of $x$ for which the inequality,
\begin{equation}
\label{MPinequality1}
n \leq R_n (x) \leq Cn
\end{equation}
does not hold is of arbitrarily small measure if $n$ is sufficiently large.
%Suppose that $\mu_\alpha (B_m) \leq
%\frac{c}{m}$ for some $c$. As we will explain in Section~\ref{SysHTS},
%Corollary~\ref{cor:measure0RTS} applies to the Manneville--Pomeau map
%when $\mu_\alpha$ is finite. Hence, this corollary implies that
%$\tilde{\mu}_\alpha (\E\cap [\frac12,1)) = 0$.
%If $\tilde{\mu}_\alpha (B_{R_n}) \geq \frac{c_0 (\log
%  n)^2}{n}$ with $c_0 \geq 2 \tau^{-1}$, then $\tilde{\mu}_\alpha (A_1) = 1$ by
%Theorem~\ref{the:sufficient}. 
Let $N_0$ be so large that the set of $x$ for which \eqref{MPinequality1} holds for $n>N_0$ is of measure at least $1-\delta$ and call this set $D_{\delta}$. For $x\in D_{\delta}$ we may apply \eqref{MPinequality1} and we
find that the condition $\tilde{\mu}_\alpha (B_m) \geq \frac{c (\log m)^{2 +
    \varepsilon}}{m}$ implies that $\tilde{\mu}_\alpha (B_{R_n(x)}) \geq \frac{c
  (\log n)^{2 + \varepsilon}}{C n} > \frac{c_0 (\log n)^2}{n}$ for large
$n$. So by picking $C_m:=B(y,r_m)$ with $\tilde{\mu}_\alpha(C_m)=\frac{c (\log m)^{2 +\varepsilon}}{m}$ we get $\tilde{A}_1\subset A_1$ where
\begin{equation*}
\tilde{A}_1:=\Bigl\{ \, x
\in \Bigl[ \frac{1}{2},1 \Bigr) : \exists\,m_0\,\forall\, m\geq m_0\, \exists\, k < m : S^k (x) \in C_m \, \Bigr\}\cap D_{\delta}.
\end{equation*}
The first set in the intersection is $\E(C_m)$ for the system $([\frac12,1), S, \tilde{\mu}_\alpha)$ which has measure one by Theorem~\ref{the:sufficient}. This shows that $\tilde{\mu}_\alpha(\tilde{A}_1)\geq 1-\delta$ and since $\delta$ is arbitrary we get $\tilde{\mu}_\alpha(A_1)=1$. This implies that $\tilde{\mu}_\alpha(\E\cap [\frac12,1)) = 1$.% for the system $([\frac12,1), g_{\alpha}, \tilde{\mu}_\alpha)$.

Assume now that $\alpha> 1$ in which case $\mu_{\alpha}$ is infinite. In this case we use \eqref{A1A2inclusion} together with a different estimate of $R_n$ which also originates from \cite[Theorem~2.19]{GalatoloHollandPerssonZhang}. In this case we have that for any $\kappa > 0$, the set of $x$ for which the estimate
\begin{equation} \label{eq:Rnestimate}
n^{\alpha - \kappa} \leq R_n (x) \leq n^{\alpha + \kappa}
\end{equation}
does not hold is of arbitrarily small measure if $n$ is sufficiently large. 
%If $\tilde{\mu}_{\alpha}(B_{R_n})\leq \frac{c}{n}$ for some $c<1$ then $\tilde{\mu}_{\alpha}(A_2)=0$ by Corollary~\ref{cor:measure0}. 
Let again $N_0$ be so large that the set $D_{\delta}$ of $x$ for which \eqref{eq:Rnestimate} holds for $n>N_0$ is of measure at least $1-\delta$. For $x\in D_{\delta}$ we have that if there exists an $\varepsilon>0$ such that $\tilde{\mu}_{\alpha}(B_m)\leq \frac{c}{m^{\frac{1}{\alpha}+\varepsilon}}$ for any $c>0$, then
\begin{equation*}
	\tilde{\mu}_{\alpha}(B_{R_n})\leq \frac{c}{R_n^{\frac{1}{\alpha}+\varepsilon}}\leq \frac{c}{n n^{\varepsilon\alpha-\frac{\kappa}{\alpha}-\varepsilon\kappa}}<\frac{c_1}{n}\quad,\quad c_1<1
\end{equation*}
for sufficiently large $n $ when we choose $\kappa$ sufficiently small compared to $\varepsilon$. As before we may pick $C_m:=B(y,r_m)$ such that $\mu(C_m)=\frac{c_1}{m}$. As before we can define a set $\tilde{A_2}$ to which we can apply Corollary~\ref{cor:measure0} and by the same reasoning we conclude $\tilde{\mu}_\alpha(\E\cap [\frac12,1)) = 0$.

The proof that $\tilde{\mu}_{\alpha}(B_m)\geq \frac{c}{m^{\frac{1}{\alpha}-\varepsilon}}$ implies $\tilde{\mu}_\alpha(\complement \E\cap [\frac12,1)) = 0$ is similar to the previous two cases and we omit the details.

%
%
%Finally, the case $y < \frac{1}{2}$, can be reduced to the case $y
%\geq \frac{1}{2}$, by changing slightly the size of the balls and
%moving them to $[\frac{1}{2},1)$: Let $n$ be the smallest number such
%  that $g_\alpha^n (y) \geq \frac{1}{2}$.  If we want to prove that
%  the measure of $\E$ is zero, then we let $\tilde{B}_m$ be the
%  smallest ball with centre $g_\alpha^n (y)$ such that $g_\alpha^n
%  (B_m) \subset \tilde{B}_m$. If we want to prove that $\E$ has full
%  measure, then we let $\tilde{B}_m$ be the largest ball such that
%  $\tilde{B}_m \subset g_\alpha^n (B_m)$. This changes the size and
%  hence also the measure of the balls by at most a multiplicative
%  constant.
%
%In the first case, if $T^k (x) \in B_m$, then $T^{k+n} (x) \in
%\tilde{B}_m$.  Similarly, in the second case if $T^k (x) \in B_m$ only
%if $T^{k+n} (x) \in \tilde{B}_m$. This is all we need to transfer the
%result from the case $y \geq \frac{1}{2}$ to the case $y <
%\frac{1}{2}$. 

We now argue why our results for $\tilde{\mu}_{\alpha} (\E(\mathbf{B}) \cap [\frac{1}{2},1))$ imply the general statement. Note first that if $y\in [\frac{1}{2},1)$ and $\tilde{\mu}_{\alpha} (\E(\mathbf{B}) \cap [\frac{1}{2},1)) = 0$ then $\mu_{\alpha}(\complement\E(\mathbf{B}))>0$. By Lemma~\ref{Lemma1} we then have $\mu_{\alpha}(\E(\mathbf{B}))=0$. The identical argument works when $\E$ is replaced by $\complement \E$.

%
%In precise terms, we claim that if $\tilde{\mu}_{\alpha} (\E(\mathbf{B}) \cap [\frac{1}{2},1)) = 0$ for $\mathbf{B}=B(y,r_m)$ for all $y\in [\frac12,1)$, then $\mu_{\alpha}(\E(\mathbf{B}))=0$ for $\mathbf{B}=B(y,r_m)$ for all $y\in [0,1)$. Also, this statement remains true if we exchange $\E$ with $\complement \E$. 
%

To go from $y\in [0,\frac12)$ to $y\in [0,1)$ requires only a little
more consideration. Notice first that each assumption on $\mu(B_m)$ in
Theorem~\ref{thmMP}\ref{thmMPb}, \ref{thmMPc} and \ref{thmMPd} is
invariant under multiplication by a constant. Assume for example the
setting of Theorem~\ref{thmMP}\ref{thmMPc}, i.e.\ $\alpha\geq 1$ and
$\mu(B_m)\leq \frac{c}{m^{\frac{1}{\alpha}+\varepsilon}}$ for some
$\varepsilon>0$ and any $c>0$. Assume that $y\in [0,\frac12)$. Let
$k_0$ denote the smallest number such that $g_{\alpha}^{k_0}(y)\in
[\frac12,1)$. There exists a $K>0$ such that for all $m\in\N$ we
have $\mu_{\alpha}(g_{\alpha}^{k_0}(B(y,r_m)))\leq
K\mu_{\alpha}(B(y,r_m))$. This is an easy consequence of
l'H\^{o}pitals rule applied to the function
$f(r):=\mu_{\alpha}(g_{\alpha}(B(y,r)))/\mu_{\alpha}(B(y,r))$. Pick
$\tilde{B}_m$ to be the smallest ball with center
$g_{\alpha}^{k_0}(y)$ such that $g_{\alpha}^{k_0}(B(y,r_m))\subset
\tilde{B}_m$. Then $\tilde{B}_m$ also satisfies the assumption of
Theorem~\ref{thmMPc} and we know that
$\mu_{\alpha}(\E(\tilde{B}_m))=0$ from the arguments above.

We argue that $\E(B_m)\subset \E(\tilde{B}_m)$. Assume that $x\in \E(B_m)$, i.e.\ $\exists\, m_0\,\forall\, m\geq m_0 \,\exists\, k<m:g_{\alpha}^k (x)\in B_m$. But if $g_{\alpha}^k (x)\in B_m$ then $g_{\alpha}^{k+k_0} (x)\in \tilde{B}_m$. Hence $\forall\, m\geq m_0+k_0\,\exists\, k<m$ such that $g_{\alpha}^{k} (x)\in \tilde{B}_m$, i.e.\ $x\in\E(\tilde{B}_m)$. The cases Theorem~\ref{thmMP}\ref{thmMPb} and \ref{thmMPd} follow by analogue arguments. This completes the proof of Theorem~\ref{thmMP}. 

\subsection{The Gau\ss{} map}
 
In this section, we will consider the Gau\ss{} map $G \colon (0,1] \to
[0,1)$ defined by $G(x) = \frac{1}{x} \mod 1$. This is a piecewise
expanding map with infinitely many branches. There is a unique measure
which is an invariant probability measure and absolutely continuous
with respect to the Lebesgue measure. We denote this measure by $\mu$,
and its density with respect to Lebesgue measure is given by $h(x) =
\frac{1}{\log 2} \frac{1}{1+x}$.

HTS for the Gau\ss{} map is known for $B_m:=[0,r_m)$, i.e.\ the interval with fixed left endpoint being 0 and right endpoint shrinking towards 0. More precisely, for $E_m=[0,r_m]$ we have, 
\begin{equation*}
G_{E_m} (t)=1-e^{-t}.
\end{equation*}
This follows from a classic result of Doeblin\footnote{To be precise,
  Doeblins proof contained a gap which was repaired by Iosifescu
  \cite{Iosifescu} in 1977, but not before Galambos \cite{Galambos}
  had proven a special case 1972 which is sufficient for our
  purposes.} \cite{Doeblin} which may easily be translated into the
above, see for example \cite[Section 5]{Haydn}. Hence for this choice
of targets, Corollary~\ref{cor:measure0RTS} applies. In particular, we
have $\mu (\E) = 0$ provided $\mu([0,r_m)) \leq \frac{c_0}{m}$ for some
  constant $c_0 > 0$.

In order to show that we may apply Theorem~\ref{the:sufficient} we
need the following short argument. Define $g:[0,1]\to [0,1]$ by $g(x)
= \frac{1}{|G'(x)|} = x^2$ whenever $x$ is not 0 or of the form $x =
\frac{1}{n}$ for some $n \in \N$. In the remaining points, we let
$g(x) = 0$. Obviously, $g$ is then of bounded variation, since the
heights of the jumps at $\frac{1}{n}$ are summable.

We have $\lVert g \rVert_\infty = 1$, but $\lVert g \cdot g \circ G
\rVert_\infty < 1$, since $g (x) \to 1$ only if $x \to 1$, and $x=1$
is not a fixed point under $G$. This implies that $G^2$ together with
$g_2 = g \cdot g \circ G$ satisfies the assumptions of Rychlik
\cite{Rychlik}. Hence we may conclude \cite[Theorem~1]{Rychlik} that
if $P \colon BV \to BV$ denotes the transfer operator associated to
$G$, then, since $G$ is mixing, $P$ can be written in the form $P = Q
+ R$, where $Q$ is the projection on the invariant density $h$, $R$
has a spectral radius which is strictly less than $1$ and $QR = RQ =
0$.

Let $f \in L^1$ and $g \in BV$. Let $c = - \int g \, \mathrm{d} \mu$,
so that $\int (g + c) \, \mathrm{d} \mu = 0$. Then
\[
\int f \circ G^n g \, \mathrm{d} \mu = \int f \circ G^n (g + c) \,
\mathrm{d} \mu + \int f \, \mathrm{d} \mu \int g \, \mathrm{d} \mu.
\]

By the choice of $c$, we have $Q ( (g+c) h) = 0$. Hence
\[
\int f \circ G^n \cdot (g + c) \, \mathrm{d} \mu = \int f \cdot R^n (
(g+c) h ) \, \mathrm{d} x.
\]
Since $R$ has a spectral radius strictly less than $1$, there are
positive constants $C_1$ and $\tau$ such that $\lVert R^n ( (g+c) h )
\rVert_{BV} \leq C e^{- \tau n} \lVert (g+c) h \rVert_{BV}$. In
particular
\begin{align*}
\lVert R^n ( (g+c) h ) \rVert_\infty & \leq \lVert R^n ( (g+c) h )
\rVert_{BV} \leq C_1 e^{- \tau n} \lVert (g+c) h \rVert_{BV}
\\ &\leq C_1 e^{- \tau n} ( \lVert g h \rVert_{BV} + \lVert c h
\rVert_{BV} ) \leq C_2 e^{- \tau n} \lVert g \rVert_{BV}.
\end{align*}
From this, it follows that
\begin{align*}
\biggl| \int f \cdot R^n ( (g+c) h ) \, \mathrm{d} x \biggr| &\leq
\int |f| \, \mathrm{d} x \cdot \lVert R^n ( (g+c) h ) \rVert_\infty
\\& \leq \frac{\lVert f \rVert_1}{2\log 2} \cdot C_2 e^{-\tau n}
\lVert g \rVert_{BV}.
\end{align*}
Hence, with $C = \frac{C_2}{2\log 2}$ we have
\[
\biggl| \int f \circ G^n g \, \mathrm{d}\mu - \int f \, \mathrm{d} \mu
\int g \, \mathrm{d} \mu \biggr| \leq C e^{- \tau n} \lVert f \rVert_1
\lVert g \rVert_{BV}.
\]

In conclusion, we may apply Theorem~\ref{the:sufficient} to the
Gau\ss{} map. In particular, we have $\mu (\E) = 1$ provided $\mu
(B_m) \geq \frac{c_0 (\log m)^2}{m}$ for some constant $c_0 >
\tau^{-1}$.

In order to obtain Theorem~\ref{ThmCF} we now use the fact that when
$r_m$ is small and $G^{j-1}(x)\in [0,r_m)$ then
  $a_j(x)\sim\frac{1}{G^{j-1}(x)}$. An easy calculation shows that
  in the definition of $\E([0,r_m))$ for the Gau\ss{} map we can
    replace $\tub{G^{k}(x)}_{k=0}^{m-1}(x)\cap [0,r_m)\neq\emptyset$
      with $\tub{a_k(x)}_{k=1}^m\cap
      \left[\frac{1}{r_m},\infty\right)\neq \emptyset$. The only thing
        left to do now is to compute the bounds on $r_m$ when
        $\mu([0,r_m)) \leq \frac{c_0}{m}$ and $\mu([0,r_m)) \geq \frac{c_0
              (\log m)^2}{m}$. Using the density of the Gau\ss{}
            measure we get that
\begin{equation*}
  \mu([0,r_m))=\frac{1}{\log 2}\log(1+r_m).
\end{equation*}
This leads to the following conclusions. If $r_m \leq
e^{\frac{c_0 \log 2}{m}} - 1$ for some $c_0 > 0$, then
\begin{equation} \label{eq:measurezero}
  \mu \Bigl( \Bigl\{\, x\in [0,1]:\exists\,m_0\in\N\,\forall m\geq
  m_0:\max_{1\leq k\leq m} a_k(x)\geq r_m^{-1} \, \Bigr\} \Bigr) =0,
\end{equation}
and if $r_m \geq e^{\frac{c_0 \log 2 (\log m)^2}{m}} - 1$ for some $c_0 > 0$
sufficiently large, then
\begin{equation} \label{eq:measureone}
  \mu \Bigl( \Bigl\{\, x\in [0,1]:\exists\,m_0\in\N\,\forall m\geq
  m_0:\max_{1\leq k\leq m} a_k(x) \geq r_m^{-1} \,\Bigr\} \Bigr) =1.
\end{equation}

Since $\frac{c_0 \log 2}{m} \leq e^{\frac{c_0 \log 2}{m}} - 1 \leq
\frac{2c_0 \log 2}{m}$ if $m$ is large and since $c_0 > 0$ arbitrary,
we can conclude that \eqref{eq:measurezero} holds provided $r_m \leq
\frac{1}{cm}$ for some $c > 0$. Letting $b_m = r_m^{-1}$, we have that
\[
  \mu \Bigl( \Bigl\{\, x\in [0,1]:\exists\,m_0\in\N\,\forall m\geq
  m_0:\max_{1\leq k\leq m} a_k(x)\geq b_m \, \Bigr\} \Bigr) =0,
\]
if $b_m \geq cm$ for some $c > 0$. This is statement
Theorem~\ref{ThmCF}\ref{thmCFa}.

Similarly, we may conclude that \eqref{eq:measureone} holds provided
$r_m \geq \frac{(\log m)^2}{cm}$ for some sufficiently small $c >
0$. Letting $b_m = r_m^{-1}$ we then have
\[
  \mu \Bigl( \Bigl\{\, x\in [0,1]:\exists\,m_0\in\N\,\forall m\geq
  m_0:\max_{1\leq k\leq m} a_k(x) \geq b_m \,\Bigr\} \Bigr) =1,
\]
if $b_m \leq \frac{c m}{(\log m)^2}$ for some sufficiently small $c >
0$. This is Theorem~\ref{ThmCF}\ref{thmCFb}.

\section{Results on the Hausdorff and packing dimension}
\label{section5}

In this section we consider the case when $T$ is the doubling map $[0,1) \to [0,1)$,
defined by $T(x) = 2x \mod 1$, and $\mu$ is Lebesgue measure.  If
we put $\Sigma = \{0,1\}^\N$, then every $x \in [0,1)$ can
be coded by a sequence $(x_i) = (d_i(x))_{i=0}^\infty$ such that
\[
d_i (x) = \left\{ \begin{array}{ll} 0 & \text{if } T^i (x) \in
[0,\frac{1}{2}), \\ 1 & \text{if } T^i (x) \in
[\frac{1}{2},1). \end{array} \right.
\] 
Note that in the symbolic setting the doubling map becomes the left shift on $\Sigma$, i.e.\ for $d(x)=x_0,x_1,\dots$ we have $T(d(x))=x_1,x_2,\dots$. Given a finite sequence $x_0, x_1, \ldots, x_m$, we let
\[
C (x_0, x_1, \ldots, x_m) = \{\, x \in [0,1) : d_i (x) = x_i \text{ for
} n = 0,1,\ldots,m \,\}.
\]
Then $C (x_0, x_1, \ldots, x_m)$ is an interval of length $2^{-(m+1)}$.

We recall the construction of the packing dimension. Let $F\subset\R^d$ and let $\delta>0$. A collection $\tub{B_i}$ of disjoint balls of radii at most $\delta$ with centres in $F$ is called a $\delta$-\emph{packing collection} for $F$. For $s\geq 0$, let
\begin{equation*}
	\tyk{P}_{\delta}^s(F)=\sup\tub{\sum_i \num{B_i}^s : \tub{B_i} \text{ is a } \delta\text{-packing collection for } F}
\end{equation*}
and
\begin{equation*}
	\tyk{P}_0^s(F)=\lim_{\delta\to 0} \tyk{P}_{\delta}^s(F).
\end{equation*}
We then define the $s$-dimensional \emph{packing measure} by
\begin{equation*}
	\tyk{P}^s(F)=\inf\tub{\sum_i \tyk{P}_0^s(F_i) : F\subset \bigcup_{i}F_i}
\end{equation*}
where the infimum is taken over all countable covers of $F$. Finally, the \emph{packing dimension} of $F$ is defined by
\begin{equation*}
	\dimp F=\sup{\tub{s:\tyk{P}^s(F)=\infty}}=\inf{\tub{s:\tyk{P}^s(F)=0}}.
\end{equation*}
In the hierarchy of dimensions the packing dimension falls between the Hausdorff dimension and the upper box-counting dimension in the sense that
\begin{equation*}
	\dimh F\leq \dimp F\leq \overline{\textup{dim}}_{B} F.
\end{equation*} 
\begin{theorem}
	Suppose $B_m = B(0, 2^{-sm})$, where $s \in (0,1)$. Then,
	\[
	\dimp \E = 1 - s.
	\]
	If $s\geq 1$ then $\dimp \E = 0$. Indeed, $\E$ is a countable set in this case.
\end{theorem}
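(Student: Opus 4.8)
The plan is to handle the two regimes separately, with the interesting content being the case $s \in (0,1)$. First I would dispose of the case $s \geq 1$: when $B_m = B(0, 2^{-sm})$ with $s \geq 1$, the requirement that $T^k(x) \in B_m$ for some $k < m$ forces, in symbolic coding, the block $d_k(x), d_{k+1}(x), \ldots$ to begin with at least $\lceil sm \rceil - 1 \geq m-1$ zeros (since membership in $B(0, 2^{-sm})$ pins down roughly $sm$ binary digits to be $0$, up to the minor fudge from the center being $0$ and the ball wrapping around $1$). For $x \in \E$, this must hold for every $m \geq m_0(x)$, with a position $k = k(m) < m$. A short pigeonhole/overlap argument shows these long zero-blocks must eventually overlap and line up, forcing $x$ to have an eventually constant ($=0$) tail, i.e.\ $x$ is a dyadic rational. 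Hence $\E$ is countable and $\dimp \E = 0$.

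For $s \in (0,1)$ I would prove the two inequalities $\dimp \E \leq 1-s$ and $\dimp \E \geq 1-s$ separately. For the upper bound, since $\dimp \E \leq \overline{\dim}_B \E$ it suffices to bound the upper box dimension of a suitable superset; more precisely, using $\E = \bigcup_n A_n$ and countable stability of packing dimension (which does hold, unlike for box dimension), it suffices to show $\overline{\dim}_B A_n \leq 1-s$ for each fixed $n$. To estimate $\overline{\dim}_B A_n$ at scale $2^{-N}$, I would cover $A_n$ by level-$N$ dyadic cylinders: a cylinder $C(x_0,\ldots,x_{N-1})$ can meet $A_n$ only if, taking $m = N$ (which is $\geq n$ for $N$ large), the constraint $\tub{x, Tx, \ldots, T^{N-1}x} \cap B_N \neq \emptyset$ is satisfiable within this cylinder, i.e.\ some shift $T^k$ of the cylinder prefix, $k < N$, is consistent with landing in $B(0, 2^{-sN})$. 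This says one of the $N$ windows of length $\approx sN$ inside the word $x_0 \cdots x_{N-1}$ must be (essentially) all zeros. The number of binary words of length $N$ having some length-$sN$ all-zero window is at most $N \cdot 2^{N - sN}$, so $N_{2^{-N}}(A_n) \leq N 2^{(1-s)N}$, giving $\overline{\dim}_B A_n \leq 1-s$. Taking the supremum over $n$ yields $\dimp \E \leq 1-s$.

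For the lower bound $\dimp \E \geq 1-s$ I would construct an explicit subset of $\E$ of full packing dimension $1-s$, or invoke a mass-distribution / Frostman-type principle for packing dimension. The cleanest route: build a point set (or Cantor-like set) inside $\E$ by concatenating, at a sparse sequence of scales $N_\ell \to \infty$, a forced all-zero block of length $\approx s N_\ell$ (to guarantee the hitting condition at the scales $m$ between $N_\ell$ and $N_{\ell+1}$, using that nestedness and the geometric shrinking make one well-placed long zero block satisfy all intermediate constraints) interleaved with completely free binary blocks of length $(1-s)N_\ell$ where every choice is allowed. Along the free blocks the "local dimension" of the natural measure is $1-s$; because packing dimension is governed by the \emph{upper} local behavior (liminf of the pre-measure ratios in the right direction), choosing the $N_\ell$ sparse enough that the free blocks dominate at infinitely many scales gives a subset of $\E$ with packing dimension $\geq 1-s$. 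I expect the main obstacle to be exactly this lower bound: one must verify carefully (i) that the constructed set genuinely lies in $\E$ — that a single suitably long and suitably placed zero-run handles \emph{all} targets $B_m$ for $m$ in a whole range, which is where nestedness of $\tub{B_m}$ and the exact rate $2^{-sm}$ enter — and (ii) that the packing dimension (not merely Hausdorff or box dimension) of the construction is $\geq 1-s$, which requires controlling the relevant covering/packing pre-measures at the favorable scales rather than just estimating a single dimension. The rest of the argument is routine counting and symbolic bookkeeping.
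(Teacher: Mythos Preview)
Your treatment of the case $s\geq 1$ and your strategy for the lower bound $\dimp \E\geq 1-s$ are essentially those of the paper: there too one builds an explicit $F\subset\E$ by forcing zero blocks at a sparse sequence of positions $(m_j)$ chosen so that $\sum_{j\le k}m_j/m_k\to 1$, equips $F$ with the natural uniform measure, and checks that the upper local dimension is $\geq 1-s$ everywhere.

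The upper bound argument, however, has a real gap. You invoke only the single constraint at $m=N$: for $x\in A_n$ with $N\geq n$ there is some $k<N$ with $T^k(x)\in B(0,2^{-sN})$, forcing $x_k,\dots,x_{k+\lceil sN\rceil-1}$ to vanish. But $k$ may be as large as $N-1$, so this zero block need not lie \emph{inside} the prefix $x_0,\dots,x_{N-1}$ at all; when $k$ is close to $N$ the only constraint on the level-$N$ cylinder is that its last few digits vanish. Concretely, every cylinder with $x_{N-1}=0$ is compatible with the $m=N$ condition (take $k=N-1$ and place the remaining zeros beyond position $N-1$), and there are $2^{N-1}$ such cylinders --- far more than your claimed $N\cdot 2^{(1-s)N}$. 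What you have actually counted are length-$N$ words containing a length-$sN$ zero block entirely within them, and membership in $A_n$ does not imply this from the scale-$N$ constraint alone. (If instead you pick the single scale $m\approx N/(1+s)$ so that the block is guaranteed to sit inside $[0,N)$, you only get $\overline{\dim}_B A_n\le 1/(1+s)$, which is strictly larger than $1-s$.)

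The paper fixes this by using the constraints at \emph{all} scales $m\in[n,N]$ simultaneously. It shows that every $x\in A_n$ outside the countable set of preimages of $0$ carries strictly increasing sequences $(m_j)_{j\ge 0}$, $(n_j)_{j\ge 1}$ with $m_0<n$, $n_j\ge s m_j$, and $m_{j-1}<(1-s)m_j$, such that a block of $n_j+1$ zeros begins at position $m_{j-1}$. The relation $m_{j-1}<(1-s)m_j$ bounds the number of such blocks meeting $[0,N)$ by $O(\log N)$, while cutting $d_{m_0}(x),\dots,d_N(x)$ at the $m_j$'s shows the prefix contains at least $s(N-m_0)$ zeros. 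Counting choices of $(m_j)$, $(n_j)$ and of the remaining free digits then gives at most $N^{O(\log N)}\,2^{(1-s)N+n}$ level-$N$ cylinders covering $A_n$, whence $\dimb A_n\le 1-s$. The missing idea in your sketch is precisely this recursive accumulation of zero-density across scales: a single constraint places at most $sN$ zeros, but not necessarily inside $[0,N)$; it is the interaction of the constraints at all scales that forces a proportion $\ge s$ of the first $N$ digits to vanish.
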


\begin{proof}
	If $s = 1$, then we will prove that $\E$
	consists only of those $x \in [0,1)$, such that $T^n (x) = 0$ for
	some $n$. Hence $\E$ is the set of all finite words concatenated with an infinite tail of zeroes from which it is clear that $\E$ is countable. Then $\E$ must also be countable for all $s>1$. 
	Assume $x \in \E$. Then $x \in A_{n}$ for
	some $n$ which means that $\tub{T^i x}_{i=0}^{m-1}\cap B_m\neq\emptyset$ for all $m\geq n$. Then somewhere from digit number 0 to digit number $m-1$, a
	block of $m$ zeroes starts. Regardless of where this block starts it will overlap with the digit at place $m-1$ and hence the digit on place $m-1$ is $0$. Since this is true for all $m\geq n$ we have shown that $d_m (x) = 0$ for all $m\geq n-1$.
	%Symbolically, $x$ corresponds to the sequence
	%$(d_n(x))_{n=0}^\infty$, and we will prove that this sequence
	%consists only of 0s after some point.
%	Suppose that $x \neq 0$. Then there is a $k$ such that the digit on
%	place $k$ is 1. Consider any $n$ with $n > k$ and $n > m$.
%	
%	Since $n > m$, somewhere from digit number 0 to digit number $n$, a
%	block of $n$ zeroes starts. Since there is a digit 1 at place $k$,
%	and $k < n$, this block must start after $k$ and therefore overlap with the digit at place $n$, so
%	that the digit on place $n$ is $0$, i.e.\ $d_n (x) = 0$. This is true for all $n>\max\tub{k,m}$ hence there
%	is at most one digit 1 after place number $m$.

%	From now on, we assume that $s < 1$. We will first prove that $\dimp
%	\E \leq 1 - s$. Since $\E
%	= \bigcup A_m$, it is enough to prove that $\dimb A_m \leq 1 - s$
%	for all $m$, where $\dimb$ denotes the box dimension.
	
	From now on, we assume that $s < 1$. We will first prove that
        $\dimp \E \leq 1 - s$. Since $\dimp$ is countably stable, $\E
        = \bigcup A_n$ and $\dimp\leq \dimb$, it is enough to prove
        that $\dimb A_n \leq 1 - s$ for all $n$, where $\dimb$ denotes
        the box dimension.  If there is an $n$ such that $T^n (x) =
        0$, then $x \in A_n$. As in the introduction, let $\Lambda$ denote the set of all such
        points which, as discussed above, is countable and may be
        disregarded, since the packing dimension of any countable set
        is 0. Set $A_n':=A_n\backslash \Lambda$.
	
	We first prove that $x\in A_n'$ if and only if
        $(d_i(x))_{i=0}^{\infty}$ has blocks of $n_j+1$ zeroes
        starting at position $m_{j-1}$ for all $j\geq 1$ where
        $(n_j)_{j=0}^{\infty}$ and $(m_j)_{j=0}^{\infty}$ are strictly
        increasing sequences satisfying
	\begin{enumerate}[label=\alph*)]
		\item\label{a} $m_0<n$.
		\item\label{b} $n_j\geq sm_j$.
		\item\label{c} $m_{j-1}<(1-s)m_j$.
	\end{enumerate}
	Suppose first that $x \in A_n'$.
%	, and assume that the sequence
%	$(d_i(x))_{i=0}^\infty$ contains infinitely many digits which are
%	1. (The points in $A_n$ for which $(d_i(x))_{i=0}^\infty$ contains
%	only finitely many non-zero digits constitute a countable set, and
%	may therefore be disregarded since the packing dimension of such
%	sets is 0.)  
	Then starting somewhere not later than at position $n-1$, the
        sequence $(d_i (x))_{i=0}^\infty$ contains a block of at least
        $s n$ zeroes\footnote{Since $sn$ is typically non-integer we
          should, in principle, be more diligent and write $\lceil sn
          \rceil$. However, to improve readability we let the relevant
          ceiling and floor functions be implicitly understood
          throughout the proof. The outcome is invariant under this
          abuse of notation.}. We let $n_1$ be the length of the
        largest such block, and let $m_0<n$ be the position of its
        first zero. This satisfies \ref{a}.
	
	The above property is true for all $m\geq n$, i.e.\ starting
        somewhere not later than at position $m-1$ a block of at least
        $s m$ zeroes start. This allows us to define sequences
        $(n_j)_{j=1}^\infty$ and $(m_j)_{j=0}^\infty$ as follows. The
        numbers $n_1$ and $m_0$ are already defined. Suppose that
        $n_j$ and $m_{j-1}$ are defined. Then we let $m_j$ be the
        position of the first digit of the leftmost block of at least
        $n_j + 1$ consequtive zeroes in the sequence $(d_i
        (x))_{i=0}^\infty$ and we let $n_{j+1}$ be the maximal number
        of zeroes in such a block.  Note that, if we make any change
        in the digits $d_i (x)$ outside of the blocks of $n_j$ zeroes
        described above, then we get a new sequence $d_i (y)$ with $y
        \in A_n'$.
	
	Since $x \in A_n'$ and $n_j$ was chosen to be maximal, the
        block of length $n_j$ must be long enough to ensure that
        $\tub{T^i x}_{i=0}^{m_j-1}\cap B_{m_j}\neq \emptyset$,
        i.e.\ we always have $n_j \geq s m_j$ and \ref{b} is
        satisfied. Furthermore, again due to $n_j$ being maximal and since $x\notin \Lambda$, the
        blocks of zeroes are separated, and we have $m_{j-1} + n_j <
        m_j$. Hence $m_{j-1} < (1 - s) m_j$ and \ref{c} is satisfied.
	
%	We now observe that if $(n_j)_{j=1}^\infty$ and $(m_j)_{j=0}^\infty$
%	are two sequences which satisfies
%	\begin{align*}
%	m_0 & \leq m, \\ n_j & \leq sm_j, \\ m_{j-1} & < (1 - s) m_j,
%	\end{align*}
%	then if $(d_n (y))_{n=0}^\infty$ is any sequence which has blocks of
%	zeroes of length $n_j$ starting at $m_{j-1}$ for all $j \geq 1$,
%	then $y \in A_m$, 
	
  	Conversely it is clear that any $x\in [0,1]$ for which \ref{a}, \ref{b} and \ref{c} holds true for $(d_i (x))_{i=0}^\infty$ is an element of $A_n'$. 
	
	Let now $N$ be fixed and take $k$ such that $m_{k-1} < N \leq m_k$.
	From $m_{j-1} < (1 - s) m_j$ we obtain that
	\[
	1 < m_1 < (1-s)^{k-2} m_{k-1} < (1-s)^{k-2} N.
	\]
	Hence
	\[
	k < 2 + \frac{\log N}{- \log (1-s)}.
	\]
	
	The numbers of zeroes in $(d_i(x))_{i=0}^\infty$ between digit
	number $m_0$ and digit number $N$ is at least $s (N - m_0)$. This is
	the case, since the finite sequence $d_{m_0} (x), \ldots, d_N (x)$
	can be cut into $k$ sequences starting at $d_{m_j} (x)$, $j = 0,1,
	\ldots k-1$ and each of these subsequences contains at least a
	quotient of $s$ zeroes.
	
	We will now cover the set $A_n'$ by intervals of the form
	\[
	C (x_0, x_1, \ldots x_N).
	\]
	The sequences $x_0, x_1, \ldots, x_N$ that we need to consider are
	only those that can be obtained from a sequence $(n_j)_{j=1}^k$ and
	$(m_j)_{j=0}^k$ satisfying the inequalities \ref{a}, \ref{b} and \ref{c} and therefore
	also with $k \leq 2 + \frac{\log N}{-\log (1-s)}$.
	
	The sequence $(m_j)_{j=0}^k$ can be chosen in at most $N^{k+1}$
	different ways, and once $(m_j)_{j=0}^k$ is chosen, we can choose
	$(n_j)_{j=1}^k$ in at most $N^k$ different way, hence in total at
	most $N^{2k + 1}$ different ways to choose the sequences. (These are
	very rough estimates, but sufficient for our purpose.)
	
	Once the sequences $(n_j)_{j=1}^k$ and $(m_j)_{j=0}^k$ are chosen,
	we have specified a certain number of zeroes, while the other digits
	in the sequence $x_0, x_1, \ldots, x_N$ remain free. There are at
	most $m_0 + (1 -s) (N - m_0) \leq (1 - s)N + n$ digits that are
	free, and hence once the sequences $(n_j)_{j=1}^k$ and
	$(m_j)_{j=0}^k$ are chosen, we may choose the sequence $x_0, x_1,
	\ldots, x_N$ in at most $2^{(1-s)N + n}$ ways.
	
	In total, the number of sequences $x_0, x_1, \ldots, x_N$ that we
	need in order to cover $A_n'$ with the sets $C (x_0, x_1, \ldots
	x_N)$, are not more than
	\[
	N^{2k + 1} 2^{(1-s)N + n} \leq N^{5 + 2 \frac{\log N}{- \log (1-s)}}
	2^{(1-s)N + n}.
	\]
	Since the sets $C (x_0, x_1, \ldots x_N)$ have diameter $2^{-(N+1)}$ we get from the definition of box dimension that
	\begin{equation*}
		\dimb(A_n')=\lim_{N\to\infty} \frac{\log\para{N^{5 + 2 \frac{\log N}{- \log (1-s)}}
				2^{(1-s)N + n}}}{-\log\para{2^{-(N+1)}}}=1-s.
	\end{equation*}
	The last equality follows since $N^{\log N}$ grows with $N$ slower than any exponential.

	We now finish by proving that $\dimp \E \geq 1 - s$. Let
        $(m_j)_{j=1}^\infty$ be a strictly increasing sequence of
        natural numbers. Using this sequence, we will construct a
        subset of $\E$ and prove that the packing dimension of this
        subset is $1 - s$ if the sequence $(m_j)_{j=1}^\infty$ is
        chosen such that 
        %$m_k = 2^k m_0$ for some $m_0 > 0$. For later
        %use, we note that this choise of $m_k$ implies that
        \begin{equation} \label{eq:mk}
          \lim_{k \to \infty} \frac{\sum_{j=1}^{k} m_j}{m_k} = 1.
        \end{equation}
	For example, we could choose the sequence inductively as $m_k:=k\sum_{j=1}^{k-1} m_j$, however, the explicit choice is irrelevant.
	Let $F$ consist of those $x \in [0,1)$ such
	that
	\[
	d_{m_j} (x), d_{m_j+1} (x), \ldots, d_{m_j + s m_{j+1}} (x) = 0,0,
	\ldots, 0
	\]
	for all $j\geq 0$. Then $F \subset \E$.
	
	We let $\mu$ be the probability measure on $F$ defined by
	\[
	\mu (C(x_0, x_1, \ldots, x_n)) = \frac{1}{N(n)},
	\]
	if $C(x_0, x_1, \ldots, x_n)$ intersects $F$ where $N(n)$ is
        the number of intervals $C(x_0, x_1, \ldots, x_n)$ that have
        non-empty intersection with $F$. Otherwise we assign measure
        0. The upper pointwise dimension of $\mu$ at $x \in F$ is
        defined by
	\[
	\overline{d}_\mu (x) =\limsup_{r \to 0} \frac{\log \mu
          \left(B(x,r)\right)}{\log r}.
        \]
        Let $r_n = 2^{-n-2}$. Then $B (x,r_n)$ is contained in the
        cylinder $C(x_0, x_1, \ldots, x_n)$ and one of the neigbouring
        cylinders. The measure of the neighbouring cylinder is either
        zero or equal to that of the cylinder $C(x_0, x_1, \ldots,
        x_n)$. Hence
        \[
        \mu (B(x,r_n)) \leq 2 \mu (C(x_0, x_1, \ldots, x_n)),
        \]
        and since $\log r_n < 0$ we have
        \[
        \frac{ \mu (B(x,r_n))}{\log r_n} \geq \frac{ \log (2 \mu (C(x_0,
          x_1, \ldots, x_n)))}{\log r_n}.
        \]
        It therefore follows that
        \begin{align*}
	\overline{d}_\mu (x) &= \limsup_{r \to 0} \frac{\log \mu
          \left(B(x,r)\right)}{\log r} \geq \limsup_{n \to \infty} \frac{\log \mu
          \left(B(x,r_n)\right)}{\log r_n} \\ &\geq \limsup_{n \to \infty}
        \frac{\log\mu\para{C(x_0, x_1, \ldots, x_n)}}{\log
          2^{-(n+2)}} \\
        &\geq \limsup_{k \to \infty}
        \frac{\log\mu\para{C(x_0, x_1, \ldots, x_{m_k})}}{\log
          2^{-(m_k+2)}}.
	\end{align*}
        We will use this inequality to show that $\overline{d}_\mu (x)
        \geq 1 - s$ for any $x \in F$. By
        \cite[Proposition~10.1]{falconertechniques}, this implies that
        $\dimp F \geq 1 - s$.
	
        We have that
	\[
	\mu (C (d_0(x), d_1(x), \ldots, d_{m_k} (x))) = \frac{1}{N(m_k)},
	\]
	and
	\[
	N(m_k) = 2^{m_k - s \sum_{j=1}^{k} m_{j}}.
	\]
	Hence
	\[
	\overline{d}_\mu (x) \geq \lim_{k \to \infty} \frac{m_k - s
		\sum_{j=1}^{k} m_{j}}{m_k+2} = \lim_{k \to \infty} 1 - s
	\frac{\sum_{j=1}^{k} m_j}{m_k+2}.
	\]
        Since $m_k$ are chosen so that \eqref{eq:mk} holds, we have
        \[
        \overline{d}_\mu (x) = 1 - s,
        \]
        which implies that $\dimp \E \geq \dimp F
        \geq 1 - s$.
\end{proof}


\begin{thebibliography}{00}

\bibitem{Athreya} J. S. Athreya, \emph{Logarithm laws and shrinking target properties}, Proc Math Sci 119 (2009), no. 4, 541--557.

\bibitem{BugeaudLiao} Y. Bugeaud, L. Liao, \emph{Uniform Diophantine
  approximation related to $b$-ary and $\beta$-expansions}, Ergodic
  Theory Dynam. Systems 36 (2016), no. 1, 1--22.

\bibitem{ChernovKleinbock} N. Chernov, D. Kleinbock, \emph{Dynamical Borel-Cantelli lemmas for Gibbs measures}, Isr. J. Math. 122 (2001), no. 1, 1--27.  

\bibitem{Doeblin} W.\ Doeblin, \emph{Remarques sur la th\'eorie m\'etrique des fractions continues},
Compositio Mathematica, 7, (1940), 353--371.
  
\bibitem{falconertechniques} K. Falconer, \emph{Techniques in
	fractal geometry}, John Wiley \& Sons, Chichester, 1997, ISBN:
0-471-95724-0

\bibitem{FFT1} A. Freitas, J. Freitas and M. Todd, \emph{Hitting time statistics and extreme value theory},
	Probab. Theory Related Fields, 147(3), 2010, 675--710. 

\bibitem{FFT2} A. Freitas, J. Freitas and M. Todd, \emph{Extremal
  Index, Hitting Time Statistics and periodicity}, Adv. Math., 231(5),
  2012, 2626--2665.

\bibitem{FFTV} A. Freitas, J. Freitas, M. Todd and S. Vaienti,
  \emph{Rare Events for the Manneville--Pomeau map}, Stochastic
  Process. Appl., 126(11), 2016, 3463--3479.

\bibitem{Galambos} J.\ Galambos,
\emph{The distribution of the largest coefficient in continued fraction expansions},
Quart. J. Math., 23, (1972), 147--151.

\bibitem{GalatoloHollandPerssonZhang} S. Galatolo, M. Holland,
  T. Persson, Y. Zhang, \emph{Birkhoff sums of infinite observables
    and anomalous time-scaling of extreme events in infinite systems},
  arXiv:1810.10742.

\bibitem{Haydn} N. Haydn, \emph{Entry and return times distribution},  
	Dynamical Systems, 28(3), (2013), 333--353.

\bibitem{Iosifescu} M. Iosifescu, 
	\emph{A Poisson law for $\psi$-mixing sequences establishing the truth of a Doeblin’s
statement}, Rev. Roumaine Math. Pures Appl., 22 (1977), 1441--1447.

\bibitem{Kelm} D.\ Kelmer,
\emph{Shrinking targets for discrete time flows on hyperbolic manifolds},
Geom. Funct. Anal., \textbf{27} no. 5, 2017, 1257--1287.

\bibitem{KeYu}
D.\ Kelmer and S.\ Yu,
\emph{Shrinking targets problems for flows on homogeneous spaces},
arXiv:1708.08953.

\bibitem{KKR}
D.\ Kleinbock, I.\ Konstantoulas and F.\ K.\ Richter,
\emph{Zero--one laws for eventually always hitting points in mixing systems},
in Preparation.

\bibitem{KleinWad} D.\ Kleinbock, N.\ Wadleigh, \emph{An inhomogeneous Dirichlet Theorem via shrinking targets}, arXiv:1709.04082. %To appear in Compositio.

%\bibitem{KleinWad} D. Kleinbock, N. Wadleigh, \emph{A zero-one law for
  %improvements to Dirichlet's Theorem}, Proc. Amer. Math. Soc. 146
  %(2018), no. 5, 1833--1844.

\bibitem{Liverani} C. Liverani, \emph{Decay of correlations for
  piecewise expanding maps}, Journal of Statistical Physics 78(3--4)
  (1995), 1111--1129.
  
\bibitem{liveranietal} C. Liverani, B. Saussol and
  S. Vaienti, \emph{Conformal measure and decay of correlation for
    covering weighted systems}, Ergodic Theory and Dynamical Systems
  18(6), 1998, 1399--1420.
  
%\bibitem{liveranietalPM} C. Liverani, B. Saussol and S. Vaienti,
%  \emph{A probabilistic approach to intermittency}, Ergodic Theory and
%  Dynamical Systems 19(3), 1999, 671--685.
  
\bibitem{Lucarini} V. Lucarini, D. Faranda, A. Freitas, J. Freitas, M.
Holland, T. Kuna, M. Nicol, M. Todd and S. Vaienti, \emph{Extremes and recurrence in dynamical
	systems}, Pure and Applied Mathematics: A Wiley Series of Texts, Monographs and Tracts, Wiley,
Hoboken, NJ, (2016).

\bibitem{Rychlik} M. Rychlik, \emph{Bounded variation and invariant
  measures}, Studia Mathematica 76 (1983), 69--80.

\bibitem{Thaler} M. Thaler, \emph{Estimates of the invariant densities
  of endomorphisms with indifferent fixed points} Israel Journal of
  Mathematics 37 (1980), no. 4, 303--314.

\bibitem{Tseng} J. Tseng, \emph{On circle rotations and the shrinking target properties}, Discrete and Continuous Dynamical Systems, 20(4), 2008, 1111--1122.
  
\bibitem{young} L.-S. Young, \emph{Decay of Correlations for Certain
  Quadratic Maps}, Communications in Mathematical Physics 146 (1992),
  123--138.
  
\end{thebibliography}
\end{document}